\newcommand{\supr}{\vee}
\newcommand{\Y}{\bigwedge}
\newcommand{\limp}{\rightarrow}
\DeclareMathOperator{\Hom}{\mathrm{Hom}}
\DeclareMathOperator{\Con}{\mathrm{Con}}
\newcommand{\Cg}{\mathrm{Cg}}
\newcommand{\modelo}[1]{#1}
\newcommand{\variedad}[1]{\mathcal{#1}}
\newcommand{\categoria}[1]{\mathsf{#1}}
\newcommand{\A}{\modelo{A}}
\newcommand{\B}{\modelo{B}}
\newcommand{\bF}{\modelo{F}}
\newcommand{\X}{\modelo{X}}
\renewcommand{\Y}{\modelo{Y}}
\newcommand{\K}{\variedad{K}}
\newcommand{\sbq}{\subseteq}
\newcommand{\llangle}{\langle\langle}
\newcommand{\rrangle}{\rangle\rangle}
\renewcommand{\modelo}[1]{\mathbf{#1}}
\newcommand{\lb}{\langle}
\newcommand{\rb}{\rangle}
\renewcommand{\phi}{\varphi}
\renewcommand{\vec}[1]{\bar{#1}}
\newcommand{\defi}{\mathrel{\mathop:}=}
\newcommand{\thr}{\mathrel{\theta}}
\DeclareRobustCommand\thinbfseries{%
  \not@math@alphabet\bfseries\mathbf
  \fontseries b\selectfont
}
\renewcommand{\bf}{\thinbfseries}
\newcommand{\axiom}[1]{{\upshape\bf #1}}
\newcommand{\dist}{\axiom{dist}}  
\newcommand{\produ}{\axiom{prod}}
\newcommand{\comm}{\axiom{comm}}
\newcommand{\perm}{\axiom{Perm}}
\newcommand{\ori}{\axiom{Ori}}
\newcommand{\onto}{\axiom{Onto}}
\newcommand{\Abs}{\axiom{Abs}}
\newcommand{\Modi}{\axiom{Mod1}}
\newcommand{\Modii}{\axiom{Mod2}}
\newcommand{\exi}{\axiom{Exi}}  
\newcommand{\puno}{\axiom{p$\mathbf{_1}$}}
\newcommand{\pdos}{\axiom{p$\mathbf{_2}$}}
\newcommand{\KF}{\variedad{K}_F}
\newcommand{\KR}{\variedad{K}_R}
\newcommand{\clase}[1]{[\![#1]\!]}
\newtheorem{theorem}{Theorem}[section]
\newtheorem{lemma}[theorem]{Lemma}
\newtheorem{prop}[theorem]{Proposition}
\newtheorem{corollary}[theorem]{Corollary}
\theoremstyle{definition}
\newtheorem{question}[theorem]{Question}
\newtheorem{definition}[theorem]{Definition}
\newtheorem{example}[theorem]{Example}
\theoremstyle{remark}
\newcommand{\keywords}[1]{{\renewcommand{\thefootnote}{\relax}\footnotetext{\emph{Keywords:}
      #1.}}}
\newcommand{\MSC}[1]{{\renewcommand{\thefootnote}{\relax}\footnotetext{\emph{MSC 2020:}
      #1.}}}
\begin{document}
\title{The associative-poset point of view on right regular bands}
\author{Joel Kuperman\thanks{
    Universidad Nacional de Córdoba. 
    Facultad de Matemática, Astronomía,  Física y
    Computación. \\
    \indent \phantom{*} Centro de Investigación y Estudios de Matemática (CIEM-FaMAF).
    Córdoba. Argentina.\\
    Supported by Secyt-UNC project 33620230100751CB and Conicet PIP project 11220210100508CO.}
  \and
  Pedro Sánchez Terraf\footnotemark[1]
}
\maketitle
\begin{abstract}
  We present two results on the relation between the class of right regular
  bands (RRBs) and their underlying
  \emph{associative posets}. The first one is a construction of a left
  adjoint to the forgetful functor that takes an RRB $(P,\cdot)$ to
  the corresponding $(P,\leq)$. The construction of such a left
  adjoint is actually done in general for any class of relational structures $(X,R)$
  obtained from a variety, where $R$ is defined by a finite conjunction of identities.
  The second result generalizes the  “inner” representations of direct product
  decompositions  of semilattices  studied by the second
  author to RRBs having at least one commuting element.
\end{abstract}
\keywords{right posemigroup, adjoint functor theorem, factor congruence, inner
  direct product}
\MSC{06F05; 20M07}
\section{Introduction}
\label{sec:introduction}

This paper is a continuation of our joint work with A.~Petrovich
\cite{2024arXiv240407877K}, where we study right regular bands (RRB) $(P,\cdot)$
using as a guiding concept the partial order which arise naturally from the
stipulation
\begin{equation}
  \label{eq:order_of_RRB}
  x \leq y \iff  x\cdot y=x.
\end{equation}
for all $x,y$ in $P$. The poset $(P,\leq)$ thus obtained is said to be
\emph{associative}.

We present two results on the relation between the class of RRBs and that of
associative posets. The first one is motivated by the question on how to obtain
a band from a given poset. To give some context, \emph{choosing} an RRB
structure for a
given associative poset is not trivial, since this necessarily involves the
Axiom of Choice as it was seen in \cite[Thm.~6.2]{2024arXiv240407877K}.
The answer we present here is to construct a left
adjoint to the obvious forgetful functor that takes an RRB $(P,\cdot)$ to
$(P,\leq)$ using (\ref{eq:order_of_RRB}). The construction of such a left
adjoint is actually generalized to any class of relational structures $(X,R)$
obtained from a variety using a finite conjunction of identities to define
$R$. This is presented in Section~\ref{sec:an-adjunction}, where we also discuss some
other concrete cases of this construction (Section~\ref{sec:examples}); notably, there is a corresponding
adjunction between the category of RRBs and that of sets with an equivalence
relation (Example~\ref{exm:equiv-relations}).

The second result (Section~\ref{sec:fact-congr-RRB}) studies direct product
representations of RRBs and generalizes \cite{fact_slat} by the second
author. The main motivation is to obtain “inner” representation of direct
factors and factor congruences as they are generally obtained for some other
algebraic structures as groups and rings (by way of direct summands). The reason
to present this result here stems from the fact that the characterization of
congruences and factors is informed by the underlying partial order. This was
accomplished for RRBs having a non trivial “group-theoretic center;” that is, RRBs
having at least one commuting element.

We conclude in Section~\ref{sec:conclusion} with some open questions.
\section{Preliminaries}
\label{sec:preliminaries}
In this section we fix notation and introduce some basic notions.

We remind the reader that a \emph{right regular band} (RRB) is an idempotent
semigroup satisfying
\begin{equation}
  \label{eq:aba-ba} a · b · a = b · a.
\end{equation}
As mentioned in Section~\ref{sec:introduction}, a poset $(P,\leq)$ is
said to be associative if there is an RRB operation such that
(\ref{eq:order_of_RRB}) holds. We then use the phrase \emph{right posemigroup} when
referring to the expanded structure $(P,\leq, \cdot)$; this stems from the fact
that the product is order-preserving on the right only (see
Lemma~\ref{lem:lower-bounds}.\ref{item:monot-prod}).
Conversely,
given an RRB $(P,\cdot)$, we say that the partial order $x \leq y$
given by (\ref{eq:order_of_RRB}) is the \emph{underlying order} of
this RRB.

We state two simple lemmas whose proofs are left as exercises for the
interested reader, or can be checked at \cite[Sect.~2]{2024arXiv240407877K}.
\begin{lemma}\label{lem:aba-ba}
 In every right posemigroup,
  \begin{enumerate}
  \item \label{item:1} $a · b \leq b$; in particular, if $b$ is
    minimal, $a · b = b$.
  \item \label{item:3} $a \leq b \implies b· a =a$.\qed
  \end{enumerate}
\end{lemma}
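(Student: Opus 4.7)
The plan is to prove the two items directly from the defining equations: associativity and idempotence of the band, the RRB identity $a \cdot b \cdot a = b \cdot a$ from (\ref{eq:aba-ba}), and the definition of $\leq$ in (\ref{eq:order_of_RRB}).

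For item~\ref{item:1}, to show $a \cdot b \leq b$ I just need to verify $(a \cdot b) \cdot b = a \cdot b$, which follows from associativity together with the idempotent law $b \cdot b = b$. The second half is then immediate: if $b$ is minimal, the inequality $a \cdot b \leq b$ forces $a \cdot b = b$.

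For item~\ref{item:3}, suppose $a \leq b$, i.e.\ $a \cdot b = a$. I would apply (\ref{eq:aba-ba}) with the roles set so that $b \cdot a = a \cdot b \cdot a$, and then substitute $a \cdot b = a$ on the right-hand side, obtaining $b \cdot a = a \cdot a = a$ by idempotence.

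No step looks like a real obstacle: both items are one-line computations once the axioms are unpacked, and the only thing to be careful about is matching the direction of the RRB identity (\ref{eq:aba-ba}) with the order of multiplication appearing in the hypothesis $a \cdot b = a$. Accordingly I would not expect to introduce any auxiliary constructions; the ``interested reader'' treatment suggested by the authors is fully justified.
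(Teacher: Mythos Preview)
Your proposal is correct and is exactly the kind of direct verification the authors had in mind when they left the proof as an exercise; both items follow in one line from idempotence, associativity, the RRB identity~(\ref{eq:aba-ba}), and the definition~(\ref{eq:order_of_RRB}) of $\leq$, precisely as you outline.
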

\begin{lemma}\label{lem:lower-bounds}
  For every right posemigroup, 
  \begin{enumerate}
  \item \label{item:monot-prod}$a\leq b$ implies $ a · x \leq b · x$.
  \item \label{item:iguales-inf} $c\leq x,y $ implies $c\leq x·y,
    y·x$. Hence if $x·y= y·x$, it must be  the infimum of $\{x,y \}$.\qed
  \end{enumerate}
\end{lemma}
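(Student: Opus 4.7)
For part (\ref{item:monot-prod}), the plan is to translate the conclusion $a\cdot x \leq b\cdot x$ into its algebraic form $(a\cdot x)\cdot(b\cdot x)=a\cdot x$ using definition (\ref{eq:order_of_RRB}), and then to reduce the left-hand side by repeatedly applying associativity, the identity $aba=ba$ from (\ref{eq:aba-ba}), and the hypothesis $a\cdot b=a$. Concretely, I would rewrite $a\cdot x\cdot b\cdot x$ by absorbing the inner block $x\cdot b\cdot x$ into $b\cdot x$ via (\ref{eq:aba-ba}), obtaining $a\cdot b\cdot x$, and then use $a\leq b$ to collapse this to $a\cdot x$. The only potentially subtle point is choosing the right bracketing in which to apply the $aba=ba$ identity, but associativity makes this harmless.

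For part (\ref{item:iguales-inf}), the first claim reduces, via (\ref{eq:order_of_RRB}), to verifying $c\cdot x\cdot y = c$ and $c\cdot y\cdot x = c$ under the hypothesis $c\cdot x=c$ and $c\cdot y=c$. Both follow by one associative regrouping and two successive applications of these hypotheses, so nothing more is needed here.

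For the final statement (that a commuting pair has its product as infimum), I would combine the preceding items with Lemma~\ref{lem:aba-ba}.\ref{item:1}. The latter gives $x\cdot y\leq y$ immediately; using the commutativity assumption $x\cdot y = y\cdot x$, the same lemma applied to the symmetric term yields $x\cdot y = y\cdot x\leq x$. Hence $x\cdot y$ is a lower bound of $\{x,y\}$, and by the first half of part (\ref{item:iguales-inf}) any other lower bound $c$ of $\{x,y\}$ satisfies $c\leq x\cdot y$. Therefore $x\cdot y$ is the infimum. I do not anticipate any real obstacle; the argument is essentially a short calculation, and the main thing to get right is to invoke (\ref{eq:aba-ba}) in the correct place in part (\ref{item:monot-prod}).
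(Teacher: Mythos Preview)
Your argument is correct in all three parts: the reduction of $(a\cdot x)(b\cdot x)$ via $xbx=bx$ and then $a\cdot b=a$ is exactly right, the verification of $c\cdot(x\cdot y)=c$ by two substitutions is immediate, and the infimum claim follows as you describe from Lemma~\ref{lem:aba-ba}.\ref{item:1} together with the first half of (\ref{item:iguales-inf}). The paper does not actually give a proof of this lemma---it is stated with a \qedsymbol{} and explicitly left as an exercise (with a pointer to \cite[Sect.~2]{2024arXiv240407877K})---so there is nothing to compare against, but your route is the natural one and would be the expected solution.
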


\section{An adjunction}
\label{sec:an-adjunction}
  
The categories of right regular bands and associative posets are canonically
linked by a forgetful functor. An adjunction between them can be constructed
explicitly by making use of the algebraic structure of RRBs and associative
posets. Instead of only doing this, in this section we present a more general result,
by constructing left adjoints for a broad family of forgetful functors that
includes the one mentioned above.

We fix a first-order language $L$ with an $n$-ary relation symbol $R$.
\begin{definition}\label{def:definable-by-conj-ids}
  Let $\K$ be a
  class of $L$-structures. We will say that $R$ is \emph{definable by
  conjunctions of identities} over $\K$ if there exist $L$-terms
  $t_1(x_1,\dots,x_n)$, \dots, $t_m(x_1,\dots,x_n)$,
  $s_1(x_1,\dots,x_n)$, \dots, $s_m(x_1,\dots,x_n)$ such
  that
  \[
    \mathcal{K}\vDash \forall x_1,\dots,x_n,\,
    R(x_1,\dots,x_n)\iff \textstyle\bigwedge_{1\leq i\leq m}
    t_i(x_1,\dots,x_n)=s_i(x_1,\dots,x_n).
  \]
\end{definition}

\begin{definition}
  Let $\K$ be a
  class of $L$-structures. We denote the set of function symbols of $L$ by $L_{\mathrm{alg}}$, and denote the reducts of $\K$ to $L_{\mathrm{alg}}$ and
  to $\{R\}$ by $\K_{\mathrm{alg}}$ and $\K_R$, respectively.
\end{definition}
From now on, $\variedad{K}$ will denote a  class of $L$-structures such that $R$ is definable by
conjunctions of identities over $\variedad{K}$. Throughout this section we will study the interplay between the categories
$\categoria{K}_{\mathrm{alg}}$ and $\categoria{K}_R$, which consist of the classes
$\variedad{K}_{\mathrm{alg}}$ and $\variedad{K}_R$ and their homomorphisms, respectively.
\begin{example}\label{ex:r_def}
	\label{exm:posemigroups-as-defd-by-conj}
	Posemigroups provide an instance of
	Definition~\ref{def:definable-by-conj-ids}. Let $L:=\{\cdot,\leq\}$ and $\variedad{K}$ be the class of right posemigroups. Here $R={\leq}$ is definable by one identity: 
	\[
	\mathcal{K}\vDash \forall x_1,x_2,\ 
	x_1\leq x_2\iff  x_1\cdot x_2 = x_1
	\]
	 In this example, $\categoria{K}_{\mathrm{alg}}$ is the variety of right
         regular bands and $\categoria{K}_R$ is the category of associative posets.
\end{example}

There is a canonical forgetful
functor $U:\categoria{K}_{\mathrm{alg}}\rightarrow\categoria{K}_R$ which assigns to every
$\mathbf{A}\in \K_{\mathrm{alg}}$ the structure
$\mathbf{A}_R:=(A,R^{\mathbf{A}})$ where
\[
  \lb a_1,\dots,a_n\rb\in R^{\mathbf{A}} \iff \mathbf{A}\vDash
  \textstyle\bigwedge_{1\leq i\leq m} t_i(a_1,\dots,a_n)=s_i(a_1,\dots,a_n);
\]
and to every $\categoria{K}_{\mathrm{alg}}$-morphism $f:\A\rightarrow \mathbf{B}$,
the $\categoria{K}_R$-morphism $Uf:\mathbf{A}_R\rightarrow
\mathbf{B}_R$ given by $Uf(a)=f(a)$ (that is, $U$ preserves morphism
as maps).

For the situation described in Example~\ref{ex:r_def}, the functor $U$ maps each RRB to its underlying associative poset.

We will show:
\begin{theorem}
  If $\K_{\mathrm{alg}}$ is a variety, the forgetful functor
  $U:\categoria{K}_{\mathrm{alg}}\rightarrow \categoria{K}_R$ has a left adjoint
  $F$.
\end{theorem}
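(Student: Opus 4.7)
The plan is to follow the classical pattern for constructing a left adjoint into a variety: build $F(\X)$ as the free $\K_{\mathrm{alg}}$-algebra on the underlying set of $\X$, quotiented so as to force exactly those instances of the defining identities that the relation $R^\X$ requires. Concretely, for each $\X = (X, R^\X) \in \K_R$, let $\bF(X)$ be the free algebra on the set $X$ in the variety $\K_{\mathrm{alg}}$, whose existence is guaranteed by the hypothesis. Let $\theta_\X$ be the congruence on $\bF(X)$ generated by
\[
\bigl\{\bigl(t_i^{\bF(X)}(\vx),\, s_i^{\bF(X)}(\vx)\bigr) : \vx \in R^\X,\ 1\leq i\leq m\bigr\},
\]
and set $F(\X) := \bF(X)/\theta_\X$; this lies in $\K_{\mathrm{alg}}$ because varieties are closed under homomorphic images. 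The unit $\eta_\X\colon \X \to U F(\X)$ will be the composite of the insertion of generators $X \to \bF(X)$ with the canonical quotient $\bF(X) \to F(\X)$.

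The first check is that $\eta_\X$ is a $\categoria{K}_R$-morphism: by the very definition of $\theta_\X$, for every $\vx \in R^\X$ the identities $t_i(\eta_\X(\vx)) = s_i(\eta_\X(\vx))$ hold in $F(\X)$ for each $i$, so $\eta_\X(\vx) \in R^{F(\X)}$ by Definition~\ref{def:definable-by-conj-ids}. For the universal property, given any $\categoria{K}_R$-morphism $g\colon \X \to U\A$ with $\A \in \K_{\mathrm{alg}}$, I would first lift $g$ to the unique algebra morphism $\tilde{g}\colon \bF(X) \to \A$ extending the underlying set map, using the freeness of $\bF(X)$. Since $g$ preserves $R$, for every $\vx \in R^\X$ the tuple $(g(x_1),\dots,g(x_n))$ belongs to $R^\A$, which by the defining identities means $t_i^\A(g(\vx)) = s_i^\A(g(\vx))$ for each $i$. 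Hence $\tilde{g}$ identifies the generating pairs of $\theta_\X$ and factors uniquely through the quotient as a morphism $\hat{g}\colon F(\X) \to \A$ with $U\hat{g} \circ \eta_\X = g$. Uniqueness of $\hat{g}$ is automatic because $\eta_\X(X)$ generates $F(\X)$ as a $\K_{\mathrm{alg}}$-algebra.

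Finally, the action of $F$ on morphisms is fixed by this universal property, sending $f\colon \X \to \Y$ to the unique algebra morphism $F(\X) \to F(\Y)$ induced by the $\categoria{K}_R$-morphism $\eta_\Y \circ f$; functoriality of $F$ and naturality of $\eta$ follow from the same uniqueness argument. I do not foresee a significant obstacle. The entire argument is powered by two dual observations, both encoded in the hypothesis that $R$ is defined \emph{uniformly} by the same finite list of identities over all of $\K$: this uniformity is exactly what guarantees, on one hand, that $\theta_\X$ correctly forces $R$ to hold on the image of $X$ in the quotient, and, symmetrically, that any $R$-preserving map from $\X$ into an algebra automatically collapses those same pairs. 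The variety hypothesis on $\K_{\mathrm{alg}}$ plays its usual double role, supplying both free algebras and closure under the quotient.
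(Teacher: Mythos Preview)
Your proposal is correct and builds exactly the same object $F(\X)=\bF(X)/\theta_\X$ with the same unit $\eta_\X$ as the paper. The difference lies entirely in the verification. The paper first defines $F$ on morphisms explicitly and proves well-definedness by invoking Gr\"atzer--Mal'cev's description of generated congruences (Lemma~\ref{malsev}), tracing a Mal'cev chain for $(c,d)\in\theta_\X$ term by term to conclude $(\tilde f(c),\tilde f(d))\in\theta_\Y$; it then separately checks that the hom-set map $\phi_{\X,\A}(f)=Uf\circ\eta_\X$ is a natural bijection. Your route is the standard and more economical one: you verify directly that $(F(\X),\eta_\X)$ is initial in $\X\downarrow U$ by observing that for any $g\colon\X\to U\A$ the free extension $\tilde g\colon\bF(X)\to\A$ kills the generating pairs of $\theta_\X$ (because $g$ preserves $R$ and $R$ is defined by those very identities in $\A$), whence $\theta_\X\subseteq\ker\tilde g$ and the factorization exists and is unique since $\eta_\X(X)$ generates $F(\X)$. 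This bypasses the Mal'cev lemma entirely---the only fact needed is that a congruence containing a set of pairs contains the congruence they generate---and you then read off $F$ on arrows, functoriality, and naturality of $\eta$ from uniqueness, rather than checking them by hand. The paper's approach has the minor virtue of making the induced map $Ff$ explicit on the nose, but your argument is shorter and conceptually cleaner.
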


We will start by defining the functor $F$ on objects.

For each set $X$, let's consider the $\K_{\mathrm{alg}}$-free algebra over $X$, 
denoted by $\bF_{\variedad{K}}(X)$. We construe $\bF_{\variedad{K}}(X)$ as the
quotient of the term algebra $T(X)$, by the least congruence $\delta_X$ such that
the quotient belongs to $\K_{\mathrm{alg}}$. For $t \in T(X)$, we will denote by
$\clase{t}$ the equivalence class of $t$ under that congruence. In particular,
$\{\clase{x}: x \in X \}$ is the set of free generators of $\bF_{\variedad{K}}(X)$.
    
We know that $\bF_{\variedad{K}}(X)$ has the following universal property: For every $\B \in \K_{\mathrm{alg}}$ and any function $\alpha: X \rightarrow B$, there exists a homomorphism $\beta: \bF_{\variedad{K}}(X) \rightarrow \B$ such that $\alpha(x) = \beta(\clase{x})$ for every $x \in X$.
We will use the following particular application: 
\begin{prop}\label{prop-univ}
  Let $X$ and $Y$ be sets and $f:X\rightarrow Y$ a function. Then the map
  given by   $\tilde{f}(\clase{t(\Bar{x})})=\clase{t(f(\Bar{x}))}$ is
  a morphism $\tilde{f}:\bF_{\variedad{K}}(X)\rightarrow
  \bF_{\variedad{K}}(Y)$. \qed
\end{prop}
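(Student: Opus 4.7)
The plan is to avoid checking well-definedness of the given formula by hand, and instead produce $\tilde f$ directly from the universal property of the free algebra stated just before the proposition, then verify \emph{a posteriori} that it is given by the stated rule on equivalence classes of terms.

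First, I would form the composite set map $\alpha: X \to F_{\variedad{K}}(Y)$ defined by $\alpha(x) := \clase{f(x)}$, sending each generator of $\bF_{\variedad{K}}(X)$ to the class of its image under $f$ viewed inside the free algebra on $Y$. The universal property of $\bF_{\variedad{K}}(X)$, applied to the algebra $\B := \bF_{\variedad{K}}(Y)$ and the function $\alpha$, yields a homomorphism $\beta: \bF_{\variedad{K}}(X) \to \bF_{\variedad{K}}(Y)$ such that $\beta(\clase{x}) = \clase{f(x)}$ for every $x \in X$. This homomorphism is our candidate for $\tilde f$.

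Next, to identify $\beta$ with the rule displayed in the proposition, I would argue by induction on term complexity. The base case $t = x_i$ is the defining equation $\beta(\clase{x_i}) = \clase{f(x_i)}$. For the inductive step, if $t = \sigma(t_1, \dots, t_k)$ for a $k$-ary operation symbol $\sigma \in L_{\mathrm{alg}}$, then using that $\beta$ is a homomorphism and that operations in $\bF_{\variedad{K}}(Y) = T(Y)/\delta_Y$ are inherited from the term algebra,
\[
\beta(\clase{t(\bar x)}) = \sigma^{\bF_{\variedad{K}}(Y)}\bigl(\beta(\clase{t_1(\bar x)}), \dots, \beta(\clase{t_k(\bar x)})\bigr) = \sigma^{\bF_{\variedad{K}}(Y)}\bigl(\clase{t_1(f(\bar x))}, \dots, \clase{t_k(f(\bar x))}\bigr) = \clase{t(f(\bar x))},
\]
by the inductive hypothesis. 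Thus $\beta$ coincides with the assignment $\clase{t(\bar x)} \mapsto \clase{t(f(\bar x))}$, and we may legitimately write $\tilde f := \beta$.

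The only genuine content of the proposition is well-definedness: if $t$ and $t'$ are terms with $t \,\delta_X\, t'$, one must know that $t(f(\bar x)) \,\delta_Y\, t'(f(\bar x))$. This is exactly the step I expect to be the main obstacle if approached directly, since it would require tracing through a derivation of $t \,\delta_X\, t'$ and reinterpreting each identity in $\bF_{\variedad{K}}(Y)$. Producing $\tilde f$ via the universal property as above bypasses this verification entirely: $\beta$ is automatically a function on $\bF_{\variedad{K}}(X)$, and the induction shows its value on a class $\clase{t(\bar x)}$ is independent of the chosen representative.
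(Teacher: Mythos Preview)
Your argument is correct and is exactly the approach the paper intends: the proposition is stated immediately after the universal property of $\bF_{\variedad{K}}(X)$ as a ``particular application'' and is marked with a \qed\ rather than given a proof. Your use of that universal property with $\alpha(x)=\clase{f(x)}$ to produce $\tilde f$, followed by the routine induction on terms to identify it with the displayed formula, simply spells out what the paper leaves implicit.
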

\begin{definition}[$F$ on objects]\label{def:F-on-objects}
  For $\X \in \K_R$, let 
  $\theta_{\X}$ be the congruence on $\bF_{\variedad{K}}(X)$ generated by
  \[
    \{\lb\clase{t_i(x_1,\dots,x_n)},\clase{s_i(x_1,\dots x_n)}\rb \mid 1\leq
      i\leq m, \lb x_1,\dots,x_n\rb\in R^{\X} \}.
  \]
  The algebra $F\X$ is  the quotient $\bF_{\variedad{K}}(X)/\theta_{\X}$.
\end{definition}
We will also need Gr\"{a}tzer's
version of  Mal'cev's characterization of compact congruences.
\begin{lemma}\label{malsev}
  Let $\A$ be any algebra and let $c,d\in A,$
  $\vec{a},\vec{b}\in
  A^{j}.$ Then $\lb c,d\rb\in \Cg ^{A}(\vec{a},\vec{b})$ if and only if 
  there
  exist $(j+m)$-ary terms 
  $r_{1}(\vec{x},\vec{u}),\dots,r_{k}(\vec{x},\vec{u})$,
  with $k$ odd, and $\vec{z}\in A^{m}$ such that: 
  \begin{align*}
      c& =r_{1}(\vec{a},\vec{z}) \\
      r_{l}(\vec{b},\vec{z})&= r_{l+1}(\vec{b},\vec{z}) && l\text{
        odd,} \\
      r_{l}(\vec{a},\vec{z})&= r_{l+1}(\vec{a},\vec{z}) && l\text{
        even,} \\
      r_{k}(\vec{b},\vec{z})&= d.
  \end{align*}
\end{lemma}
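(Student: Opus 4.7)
The plan is to treat the two directions separately, using that $\Cg^{A}(\vec{a},\vec{b})$ is by definition the smallest congruence on $\A$ containing each pair $\lb a_i,b_i\rb$ for $1\leq i\leq j$.

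For the easy direction ($\Leftarrow$), write $\th:=\Cg^{A}(\vec{a},\vec{b})$. Since $\th$ is a congruence containing every $\lb a_i,b_i\rb$, componentwise application of any $(j+m)$-ary term to the pair $(\vec{a},\vec{z}),(\vec{b},\vec{z})$ produces a new $\th$-related pair; in particular $\lb r_l(\vec{a},\vec{z}),r_l(\vec{b},\vec{z})\rb\in\th$ for every $l$. Splicing these congruences with the equalities listed in the statement produces the zig-zag
\[
c=r_1(\vec{a},\vec{z})\thr r_1(\vec{b},\vec{z})=r_2(\vec{b},\vec{z})\thr r_2(\vec{a},\vec{z})=\dots=r_k(\vec{a},\vec{z})\thr r_k(\vec{b},\vec{z})=d,
\]
and transitivity of $\th$ yields $\lb c,d\rb\in\th$.

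For the non-trivial direction ($\Rightarrow$), I would define
\[
\Theta:=\bigl\{\lb c,d\rb\in A^{2}: \text{a Mal'cev chain witnessing }(c,d)\text{ of the form above exists}\bigr\}
\]
and prove that $\Theta$ is a congruence on $\A$ with $\lb a_i,b_i\rb\in \Theta$ for each $i\leq j$. Minimality of $\Cg^{A}(\vec{a},\vec{b})$ then forces $\Cg^{A}(\vec{a},\vec{b})\sbq\Theta$, which is the desired implication. Containment of the generating pairs is witnessed by taking $k=1$, $m=0$, $r_1(\vec{x})=x_i$; reflexivity by $k=1$ with $r_1(\vec{x},u)=u$ and $\vec{z}=(c)$; symmetry by reversing a chain; transitivity by concatenation of two chains; and compatibility with each basic operation $f$ of arity $n$ by substituting chains for the $n$ arguments into $f$, one coordinate at a time.

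The main technical difficulty is the bookkeeping of the parity of $k$ and of the auxiliary parameter vector $\vec{z}$ when building these derived chains. The standard device is to pad any given chain by duplicating its last term, which lets one adjust the parity of $k$ without altering either endpoint (the duplicated step gives a tautological equality in the required side), and to enlarge $\vec{u}$ together with $\vec{z}$ whenever several chains must share a common parameter list, as happens at the junction of two chains during a concatenation and when combining chains coordinatewise under a basic operation of arity $\geq 2$.
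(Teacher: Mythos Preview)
The paper does not actually prove this lemma: it is quoted as a known result (``Gr\"atzer's version of Mal'cev's characterization of compact congruences'') and used as a black box in the proof of Lemma~\ref{lem:ext-F}. So there is no paper proof to compare against.

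That said, your sketch is the standard textbook argument and is correct. The $(\Leftarrow)$ direction is exactly as you say. For $(\Rightarrow)$, defining $\Theta$ to be the set of pairs admitting such a chain and showing it is a congruence containing the generators is the canonical route. Your witnesses for reflexivity and for the generating pairs are fine. For symmetry, plain reversal of the sequence $r_1,\dots,r_k$ does not quite work as stated because the roles of $\vec a$ and $\vec b$ at the two ends are asymmetric; one needs, in addition to reversing, to bracket the reversed chain with two constant projection terms taking the values $d$ and $c$ from newly adjoined parameter slots---but this is exactly the ``pad and enlarge $\vec z$'' manoeuvre you describe. Likewise, concatenation for transitivity requires inserting a single projection term (with value $d$) at the junction so that the parities line up and the total length $k+1+k'$ stays odd. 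Compatibility with a basic operation is handled coordinatewise, as you indicate, using the already-established transitivity. All of this is routine once the devices you name are in hand, so the proposal is sound.
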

The following result completes the definition of $F$, now on arrows of $\categoria{K}_R$. 
\begin{lemma}[$F$ on morphisms]\label{lem:ext-F}
  Let $\X,\Y\in\K_R$ and $f:\X\rightarrow \Y$ an homomorphism. The function $Ff:F\X\rightarrow F\Y$ given by $Ff(\clase{t(\Bar{x})}/\theta_{\X})=\clase{t(f(\bar{x}))}/\theta_{\Y}$ is well defined and is a morphism.
\end{lemma}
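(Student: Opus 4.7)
My plan is to realize $Ff$ as the factorization through the quotient of the composition $\pi_{\Y}\circ\tilde f$, where $\tilde f\colon\bF_{\variedad{K}}(X)\to\bF_{\variedad{K}}(Y)$ is the morphism provided by Proposition~\ref{prop-univ} and $\pi_{\Y}\colon\bF_{\variedad{K}}(Y)\to F\Y$ is the canonical projection. Concretely, I would set $g:=\pi_{\Y}\circ\tilde f$, which is a $\categoria{K}_{\mathrm{alg}}$-homomorphism by construction, and then show that $\theta_{\X}\subseteq\ker g$. Once this is done, the universal property of the quotient $\bF_{\variedad{K}}(X)\to F\X$ yields a unique $\categoria{K}_{\mathrm{alg}}$-homomorphism $Ff\colon F\X\to F\Y$ satisfying $Ff\circ\pi_{\X}=g$; the formula $Ff(\clase{t(\bar x)}/\theta_{\X})=\clase{t(f(\bar x))}/\theta_{\Y}$ then follows directly from the definition of $\tilde f$. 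This dispenses with well-definedness at both levels (independence from the representative of the $\delta_X$-class is handled by Proposition~\ref{prop-univ}, and independence from the $\theta_{\X}$-class by the inclusion $\theta_{\X}\subseteq\ker g$), and gives the morphism property for free.

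The key step is the verification that $\theta_{\X}\subseteq\ker g$. Because $\ker g$ is a congruence on $\bF_{\variedad{K}}(X)$ and $\theta_{\X}$ is generated by the set
\[
\{\lb\clase{t_i(x_1,\dots,x_n)},\clase{s_i(x_1,\dots,x_n)}\rb \mid 1\leq i\leq m,\ \lb x_1,\dots,x_n\rb\in R^{\X}\},
\]
it suffices to check the inclusion on these generators. Fix such a pair and apply $\tilde f$ to obtain $\lb\clase{t_i(f(\bar x))},\clase{s_i(f(\bar x))}\rb$ in $\bF_{\variedad{K}}(Y)^2$. Here the crucial hypothesis is that $f$ is a $\categoria{K}_R$-morphism, so that $\lb f(x_1),\dots,f(x_n)\rb\in R^{\Y}$; this exactly means that the resulting pair is one of the generators of $\theta_{\Y}$, hence lies in $\theta_{\Y}$ and is collapsed by $\pi_{\Y}$. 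This shows $g$ identifies the two sides, establishing the generator inclusion.

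I do not expect a significant obstacle: the entire argument is a routine exercise in the universal properties of free algebras and of quotient algebras, and the only substantive ingredient is the translation of the hypothesis \emph{$f$ preserves $R$} into the statement \emph{$\tilde f$ sends the distinguished generators of $\theta_{\X}$ into the distinguished generators of $\theta_{\Y}$}. I would finish by remarking that $F\X,F\Y\in\variedad{K}_{\mathrm{alg}}$ (needed so that $Ff$ is a morphism in $\categoria{K}_{\mathrm{alg}}$) uses the standing assumption that $\variedad{K}_{\mathrm{alg}}$ is a variety, hence closed under homomorphic images.
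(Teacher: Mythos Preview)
Your argument is correct, and it takes a genuinely different route from the paper's own proof. Both proofs reduce to showing that $\tilde f$ carries $\theta_{\X}$ into $\theta_{\Y}$ (equivalently, that $\theta_{\X}\subseteq\ker(\pi_{\Y}\circ\tilde f)$), but they establish this in different ways. You observe that $\ker(\pi_{\Y}\circ\tilde f)$ is a congruence on $\bF_{\variedad{K}}(X)$, so it suffices to check that it contains the \emph{generators} of $\theta_{\X}$; this is immediate once you note that $f$ preserves $R$, and the morphism property of $Ff$ then comes for free from the factorization theorem for quotients. The paper instead invokes Gr\"atzer's version of Mal'cev's characterization of principal congruences (Lemma~\ref{malsev}): given an arbitrary pair $\lb c,d\rb\in\theta_{\X}$, it extracts an explicit Mal'cev chain of terms $r_1,\dots,r_k$ witnessing that relation and then painstakingly pushes the chain through $\tilde f$, step by step, to exhibit a witness for $\lb\tilde f(c),\tilde f(d)\rb\in\theta_{\Y}$; the morphism property is verified separately by hand. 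Your approach is shorter, more conceptual, and renders Lemma~\ref{malsev} unnecessary for this lemma; the paper's approach is more explicit and displays exactly how the equalities propagate, which may be pedagogically useful but is logically redundant given the universal property of generated congruences.
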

\begin{proof}
  Let $f$ be as in the assumptions, and let $\tilde{f}:\bF_{\variedad{K}}(X)\rightarrow \bF_{\variedad{K}}(Y)$  be the
  morphism provided by  Proposition~\ref{prop-univ} for $f$.
  In particular, we have $\tilde{f}(\clase{x})=\clase{f(x)}$ for every $x\in X$.
  Now, take $c,d\in \bF_{\variedad{K}}(X)$ (where, $c=\clase{t(\bar{x})}$
  and $d=\clase{s(\Bar{x})}$ for some $s,t\in T(X)$) such that $\lb c,d\rb\in\theta_{\X}$. If we can show that $\lb \tilde{f}(c),\tilde{f}(d)\rb\in\theta_{\Y}$, we can then define $Ff(\clase{t(\bar{x})}/\theta_{\X}):=\tilde{f}(\clase{t(\bar{x})})/\theta_{\Y}$, and by how we have chosen $\tilde{f}$, this function will satisfy what we want.
  Since $\lb c,d\rb \in\theta_{\X}$ is witnessed by finitely many tuples from the
  generators of $\theta_{\X}$, namely
  $\lb\clase{t_{i_p}(x_1^p,\dots,x_n^p)},\clase{s_{i_p}(x_1^p,\dots, x_n^p)}\rb$
  for some appropriate $x_l^p\in X$ (with $l=1,\dots, n$, $p=1,\dots, j$), 
  Lemma~\ref{malsev} gives us $(j+m)$-ary terms
  $r_{1}(\vec{x},\vec{u})$,\dots, $r_{k}(\vec{x},\vec{u})$,
  with $k$ odd,
  and $\vec{z}\in
  \bF_{\variedad{K}}(X)^{m}$ such that:
  \begin{align}
    c&=r_{1}(\clase{\bar{W}},\vec{z}),\label{eq:malcev-init}\\ 
    r_{l}(\clase{\bar{V}},\vec{z})&=r_{l+1}(\clase{\bar{V}},\vec{z})
    &&\text{for odd $l$,}\label{eq:malcev-odd} \\
    r_{l}(\clase{\bar{W}},\vec{z})&=r_{l+1}(\clase{\bar{W}},\vec{z})
    &&\text{for even $l$,}\label{eq:malcev-even} \\
    r_{k}(\clase{\bar{V}},\vec{z})&=d\label{eq:malcev-end}
  \end{align}
  where 
  \begin{align*}
    \bar{\mathrm{x}} &\defi x_1^1,\dots,x_n^1,\dots,x_1^j,\dots,x_n^j\\
    \bar{W} &= \bar{W}(\bar{\mathrm{x}}) \defi t_{i_1}(x_1^1,\dots,x_n^1),\dots,t_{i_j}(x_1^j,\dots,x_n^j)\\
    \bar{V} &= \bar{V}(\bar{\mathrm{x}}) \defi s_{i_1}(x_1^1,\dots,x_n^1),\dots,s_{i_j}(x_1^j,\dots,x_n^j),
  \end{align*}
  with the following abuses of notation: We take equivalence classes term-wise,
  \[
    \clase{\bar{W}}=\clase{t_{i_1}(x_1^1,\dots,x_n^1)},\dots,\clase{t_{i_j}(x_1^j,\dots,x_n^j)},
  \]
  and apply functions in the same way:
  \begin{align*}
    h(\bar{W}) &=  h\bigl(t_{i_1}(x_1^1,\dots,x_n^1)\bigr),\dots,h\bigl(t_{i_j}(x_1^j,\dots,x_n^j)\bigr)\\
    \bar{W}\bigl(h(\bar{\mathrm{x}})\bigr) &= t_{i_1}\bigl(h(x_1^1),\dots,h(x_n^1)\bigr),\dots,t_{i_j}\bigl(h(x_1^j),\dots,h(x_n^j)\bigr)
  \end{align*}

  Using the fact that $\lb f(x_1^i), \dots, f(x_n^i)\rb \in
  R^{\Y}$ for every $i$, we will 
  show that the terms obtained  witness the fact that  $\lb\tilde{f}(c), \tilde{f}(d)\rb \in
  \theta_{\Y}$ by transitivity.
  \begin{align*}
    \tilde{f}(c)&=\tilde{f}\bigl(r_{1}(\clase{\bar{W}(\bar{\mathrm{x}})},\vec{z})\bigr)&&\text{by (\ref{eq:malcev-init})}\\ 
    &=r_1\bigl(\tilde{f}(\clase{\bar{W}(\bar{\mathrm{x}})}),\tilde{f}(\bar{z})\bigr)&&\text{$\tilde{f}$ is a morphism}\\
    &=r_1\bigl(\clase{\bar{W}(f(\bar{\mathrm{x}}))},\tilde{f}(\bar{z})\bigr).&&\text{by definition of $\tilde{f}$}
  \end{align*}
  Observe now that $\lb
  r_1(\clase{\bar{W}(f(\bar{\mathrm{x}}))},\tilde{f}(\bar{z})),
  r_1(\clase{\bar{V}(f(\bar{\mathrm{x}}))},\tilde{f}(\bar{z}))\rb \in
  \theta_\Y$, where the second term appears for $l=1$ below. In general, for odd $l$,   
  \begin{align*}
    r_l\bigl(\clase{\bar{V}(f(\bar{\mathrm{x}}))},\tilde{f}(\bar{z})\bigr)
    &=r_l\bigl(\bar{V}(\clase{f(\bar{\mathrm{x}})}),\tilde{f}(\bar{z})\bigr)&&\text{$\delta_Y$ is a congruence}\\
    &=r_l\bigl(\bar{V}(\tilde{f}(\clase{\bar{\mathrm{x}}})),\tilde{f}(\bar{z})\bigr) &&\text{by definition of $\tilde{f}$}\\
    &=r_l\bigl(\tilde{f}(\bar{V}(\clase{\bar{\mathrm{x}}})),\tilde{f}(\bar{z})\bigr) &&\text{$\tilde{f}$ is a morphism}\\
    &=r_l\bigl(\tilde{f}(\clase{\bar{V}(\bar{\mathrm{x}})}),\tilde{f}(\bar{z})\bigr) &&\text{$\delta_X$ is a congruence}\\
    &=\tilde{f}\bigl(r_l(\clase{\bar{V}(\bar{\mathrm{x}})},\bar{z})\bigr) &&\text{$\tilde{f}$ is a morphism}\\
    &=\tilde{f}\bigl(r_{l+1}(\clase{\bar{V}(\bar{\mathrm{x}})},\bar{z})\bigr) && \text{by (\ref{eq:malcev-odd})}\\
    &=r_{l+1}\bigl(\tilde{f}(\clase{\bar{V}(\bar{\mathrm{x}})}),\tilde{f}(\bar{z})\bigr) &&\text{$\tilde{f}$ is a morphism}\\
    &=r_{l+1}\bigl(\tilde{f}(\bar{V}(\clase{\bar{\mathrm{x}}})),\tilde{f}(\bar{z})\bigr) &&\text{$\delta_X$ is a congruence}\\
    &=r_{l+1}\bigl(\bar{V}(\tilde{f}(\clase{\bar{\mathrm{x}}})),\tilde{f}(\bar{z})\bigr) &&\text{$\tilde{f}$ is a morphism}\\
    &=r_{l+1}\bigl(\clase{\bar{V}(f(\bar{\mathrm{x}}))},\tilde{f}(\bar{z})\bigr)&&\text{by definition of $\tilde{f}$}.
  \end{align*}
  In general, $\lb
  r_{l+1}(\clase{\bar{W}(f(\bar{\mathrm{x}}))},\tilde{f}(\bar{z})),
  r_{l+1}(\clase{\bar{V}(f(\bar{\mathrm{x}}))},\tilde{f}(\bar{z}))\rb \in
  \theta_\Y$, for $l\leq k-1$. Similarly, for even $l$,
  \begin{align*}
    r_l\bigl(\clase{\bar{W}(f(\bar{\mathrm{x}}))},\tilde{f}(\bar{z})\bigr)
    &=r_l\bigl(\tilde{f}(\clase{\bar{W}(\bar{\mathrm{x}})}),\tilde{f}(\bar{z})\bigr) &&\text{by definition of $\tilde{f}$}\\
    &=\tilde{f}\bigl(r_l(\clase{\bar{W}(\bar{\mathrm{x}})}),\bar{z}\bigr) &&\text{$\tilde{f}$ is a morphism}\\
    &=\tilde{f}\bigl(r_{l+1}(\clase{\bar{W}(\bar{\mathrm{x}})},\bar{z})\bigr) && \text{by (\ref{eq:malcev-even})}\\
    &=r_{l+1}\bigl(\tilde{f}(\clase{\bar{W}(\bar{\mathrm{x}})}),\tilde{f}(\bar{z})\bigr) &&\text{$\tilde{f}$ is a morphism}\\
    &=r_{l+1}\bigl(\clase{\bar{W}(f(\bar{\mathrm{x}}))},\tilde{f}(\bar{z})\bigr)&&\text{by definition of $\tilde{f}$}.
  \end{align*}
  Finally,
  \begin{align*}
    r_k\bigl(\clase{\bar{V}(f(\bar{\mathrm{x}}))},\tilde{f}(\bar{z})\bigr)
    &=r_k\bigl(\bar{V}(\clase{f(\bar{\mathrm{x}})}),\tilde{f}(\bar{z})\bigr) &&\text{$\delta_Y$ is a congruence}\\
    &=r_k\bigl(\bar{V}(\tilde{f}(\clase{\bar{\mathrm{x}}})),\tilde{f}(\bar{z})\bigr) &&\text{by definition of $\tilde{f}$}\\
    &=r_k\bigl(\tilde{f}(\bar{V}(\clase{\bar{\mathrm{x}}})),\tilde{f}(\bar{z})\bigr) &&\text{$\tilde{f}$ is a morphism}\\
    &=r_k\bigl(\tilde{f}(\clase{\bar{V}(\bar{\mathrm{x}})}),\tilde{f}(\bar{z})\bigr) &&\text{$\delta_X$ is a congruence}\\
    &=\tilde{f}\bigl(r_k(\clase{\bar{V}(\bar{\mathrm{x}})},\bar{z})\bigr) &&\text{$\tilde{f}$ is a morphism}\\
    &=\tilde{f}(d)&&\text{by (\ref{eq:malcev-end}).}
  \end{align*}
  Therefore the function $Ff$
  is well defined. Now take $q$ an $n$-ary operation symbol in
  $L_{\mathrm{alg}}$. We have that
\begin{align*} 
	Ff\bigl(q^{F\X}(\clase{u_1}/\theta_{\X},\dots, \clase{u_n}/\theta_{\X})\bigr)
	&= \tilde{f}\bigl(q^{\bF_{\variedad{K}}(X)}(\clase{u_1},\dots,\clase{u_n})/\theta_{\X}\bigr)\\
	&=q^{\bF_{\variedad{K}}(Y)}\bigl(\tilde{f}(\clase{u_1}),\dots, \tilde{f}(\clase{u_n})\bigr)/\theta_{\Y}\\
	&= q^{F\Y}\bigl(\tilde{f}(\clase{u_1})/\theta_{\Y},\dots,\tilde{f}(\clase{u_n})/\theta_{\Y}\bigr)\\
	&=q^{F\Y}\bigl(Ff(\clase{u_1}/\theta_{\X}),\dots, Ff(\clase{u_n}/\theta_{\X})\bigr)
 \end{align*}
 so $Ff$ is indeed a morphism.
\end{proof}

For a structure $\X\in\categoria{K}_R$, we define an inclusion $\eta_{\X}: \X\to
UF\X$ by $\eta_{\X}(a)\defi \clase{a}/\theta_{\X}$.  The previous lemma allows
us to prove the following:
\begin{lemma}
  \label{lem:adjunto}
  For every $\X$ and $\Y$ in $\categoria{K}_R$, and $\categoria{K}_R$-morphism $h:\Y\rightarrow \X$:
  \begin{equation}\label{eq:adjunto}
    UFh\circ\eta_\Y=\eta_\X\circ h.
  \end{equation}
\end{lemma}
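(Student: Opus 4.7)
The plan is a direct pointwise computation. Fix an arbitrary $a\in Y$. By the definition of $\eta_\Y$ we have $\eta_\Y(a)=\clase{a}/\theta_\Y$, where $a$ is regarded as a free generator of $\bF_{\variedad{K}}(Y)$, i.e.\ as the term $t(y)\defi y$ evaluated at $a$. Since $U$ acts as the identity on underlying maps, computing $UFh(\eta_\Y(a))$ reduces to applying the formula from Lemma~\ref{lem:ext-F} to $\clase{a}/\theta_\Y$ using the trivial unary term $t(y)=y$; the result is $\clase{h(a)}/\theta_\X$. On the other side, $\eta_\X(h(a))=\clase{h(a)}/\theta_\X$ directly from the definition of $\eta_\X$. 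Hence both sides of (\ref{eq:adjunto}) agree on every $a\in Y$, and the identity holds.

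I anticipate no genuine obstacle here: the formula prescribing $Ff$ in Lemma~\ref{lem:ext-F} is engineered exactly so that, when evaluated on the image of a generator under $\eta$, it commutes with composing $\eta$ with $h$. The whole naturality square therefore collapses into a one-step unfolding of definitions. The only mild care required is to view the element $a$ as the value of the identity term at $a$, so that Lemma~\ref{lem:ext-F} is directly applicable; once this is noted, no Mal'cev-style argument is needed for this step, because that work was already absorbed into establishing that $Ff$ is well defined.
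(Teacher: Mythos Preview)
Your proof is correct and follows essentially the same approach as the paper: a pointwise unfolding of the definitions of $\eta_{\X}$, $\eta_{\Y}$, and $Fh$, together with the fact that $U$ preserves morphisms as functions. The paper's argument is identical in structure, just traversing the equalities in the opposite order (starting from $\eta_{\X}\circ h$ rather than $UFh\circ\eta_{\Y}$).
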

\begin{proof}
  Let $y \in Y$. We have:
  $(\eta_{\X} \circ h)(y) = \eta_{\X}(h(y)) =\clase{h(y)}/\theta_{\X}.$
  Notice that, by the definition of $Fh$, this element is equal to $Fh(\clase{y}/\theta_{\Y}) = \bigl(Fh \circ \eta_{\Y}\bigr)(y)$. And since $U$ preserves morphisms as functions, we have that this is equal to $\bigl(UFh \circ \eta_{\Y}\bigr)(y)$.
\end{proof}
For a structure $\X \in \categoria{K}_R$, we define %
a congruence $D_{\X}$ on
$T(X)$ as follows: $\lb t(\bar{x}), s(\bar{x})\rb \in D_{\X}$ if and only if
$t^{\A}(\bar{x}) = s^{\A}(\bar{x})$ for every $\A \in \K_{\mathrm{alg}}$ such
that $U \A = \X$. It is clear that $\delta_X\sbq D_{\X}$.
\begin{lemma}\label{lem:proy-lib}
  Given a structure $\X\in \categoria{K}_R$, and an algebra $\A\in \KF$ such
  that $U \A= \X$ 
  there exists a morphism $g:F\X\rightarrow \A$ such that $g(\eta_{\X}(x))=x$ for every $x\in X$.
\end{lemma}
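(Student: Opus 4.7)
The plan is to construct $g$ by invoking the universal property of the free algebra $\bF_{\variedad{K}}(X)$ and then showing that the resulting map factors through the congruence $\theta_{\X}$.

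First, since $U\A = \X$, in particular the underlying set of $\A$ is $X$. Applying the universal property of $\bF_{\variedad{K}}(X)$ to the identity function $\mathrm{id}_X:X\to A$ (viewed as a function $X\to A$), we obtain a $\categoria{K}_{\mathrm{alg}}$-morphism $\hat{g}:\bF_{\variedad{K}}(X)\to \A$ such that $\hat{g}(\clase{x})=x$ for every $x\in X$. By induction on terms, this gives $\hat{g}(\clase{t(\bar{x})})=t^{\A}(\bar{x})$ for every term $t\in T(X)$ and tuple $\bar{x}$ from $X$.

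Next I would show that $\theta_{\X}\sbq \ker\hat{g}$. Since $\ker\hat{g}$ is a congruence, it suffices to verify that every generator of $\theta_{\X}$ lies in it. Take $\lb x_1,\dots,x_n\rb\in R^{\X}$ and $1\leq i\leq m$. Because $U\A=\X$, the defining property of $U$ yields
\[
  \A\vDash \textstyle\bigwedge_{1\leq i\leq m} t_i(x_1,\dots,x_n)=s_i(x_1,\dots,x_n),
\]
so in particular $t_i^{\A}(\bar{x})=s_i^{\A}(\bar{x})$. Combined with the formula above for $\hat{g}$, this gives $\hat{g}(\clase{t_i(\bar{x})})=\hat{g}(\clase{s_i(\bar{x})})$, as needed.

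Finally, by the homomorphism theorem, $\hat{g}$ factors uniquely through the quotient $\bF_{\variedad{K}}(X)/\theta_{\X}=F\X$ as a morphism $g:F\X\to\A$ satisfying $g(\clase{t(\bar{x})}/\theta_{\X})=\hat{g}(\clase{t(\bar{x})})=t^{\A}(\bar{x})$. In particular, $g(\eta_{\X}(x))=g(\clase{x}/\theta_{\X})=x$ for every $x\in X$. There is no real obstacle here; the statement is essentially a bookkeeping consequence of the freeness of $\bF_{\variedad{K}}(X)$ and the explicit description of the generators of $\theta_{\X}$, the only point requiring a moment's care being the observation that membership in $R^{\X}$ forces the identities $t_i=s_i$ to hold in any $\A$ with $U\A=\X$.
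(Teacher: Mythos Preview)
Your proof is correct and follows the same underlying idea as the paper's: verify that the generators of $\theta_{\X}$ lie in an appropriate kernel, then factor through the quotient. The execution differs, however. The paper introduces an auxiliary congruence $D_{\X}$ on $T(X)$ (identifying terms that agree in \emph{every} $\A$ with $U\A=\X$), shows $\theta_{\X}\subseteq D_{\X}/\delta_X$, and then threads the map through two applications of the Second Isomorphism Theorem before landing in $\A$. You instead work directly with the single kernel $\ker\hat{g}$ on $\bF_{\variedad{K}}(X)$ for the given $\A$ and invoke the homomorphism theorem once. Your route is more economical: the intermediate object $D_{\X}$ is not needed for this lemma and is not used elsewhere in the paper, so nothing is lost by bypassing it.
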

\begin{proof}
  Note there is a morphism $f:T(X)/D_{\X}\rightarrow \A$ for which
  $f(x/D_{\X})=x$, given by $f(t(\bar{x})/D_{\X})\defi t^\A(\bar{x})$; this
  is well defined by definition of $D_{\X}$.

  We have that
  $\theta_{\X}\subseteq D_{\X}/\delta_X$: For every $\bar{x}\in X^n$ such
  that $\bar{x}\in R^{\X}$ we have that
  $\lb t_i(\bar{x}),s_i(\bar{x})\rb\in D_{\X}$ for $1\leq i\leq
  m$ and hence
  \[
    \{ \lb \clase{t_i(\bar{x})},
    \clase{s_i(\bar{x})}\rb:1\leq i\leq m, \bar{x}\in
    R^{\X} \}\subseteq D_{\X}/\delta_X
  \]
  by definition of $D_{\X}/\delta_X$. Since $\theta_{\X}$ is generated by those
  pairs, we obtain the inclusion.

  We now know that there is a projection morphism
  \[
    \pi:(T(X)/\delta_X)/\theta_{\X}\rightarrow
    ((T(X)/\delta_X)/\theta_{\X})/((D_{\X}/\delta_X)/\theta_{\X})
  \]
  which satisfies
  \[
    \pi(\clase{x}/\theta_{\X})=(\clase{x}/\theta_{\X})/\bigl((D_{\X}/\delta_X)/\theta_{\X}\bigr).
  \]
  By the Second Isomorphism Theorem, there are isomorphisms
  \[
    h_1:\bigl((T(X)/\delta_X)/\theta_{\X}\bigr)/\bigl((D_{\X}/\delta_X)/\theta_{\X}\bigr)\rightarrow
    (T(X)/\delta_X)/(D_{\X}/\delta_X)
  \]
  and
  \[      
    h_2:(T(X)/\delta_X)/(D_{\X}/\delta_X)\rightarrow T(X)/D_{\X}
  \]
  which satisfy
  \[
    h_1\bigl((\clase{x}/\theta_{\X})/\bigl((D_{\X}/\delta_X)/\theta_{\X}\bigr)\bigr)=\clase{x}/(D_{\X}/\delta_X)
  \]
  and
  \[       
    h_2\bigl(\clase{x}/(D_{\X}/\delta_X)\bigr)=x/D_{\X}.
  \]
  We can now take $g\defi f\circ h_2\circ h_1\circ \pi:F\X\rightarrow A$
  which is a morphism satisfying $g(\eta_{\X}(x))=x$.
\end{proof}

\begin{lemma}
  $F:\categoria{K}_{R}\rightarrow \categoria{K}_{\mathrm{alg}}$ is a functor.
\end{lemma}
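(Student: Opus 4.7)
The plan is to verify the two functoriality axioms---preservation of identity morphisms and preservation of composition---each of which reduces to a direct computation using the explicit formula for $Ff$ provided by Lemma~\ref{lem:ext-F}. The substantive work of showing that this formula yields a well-defined $\categoria{K}_{\mathrm{alg}}$-morphism was already carried out there, so what remains is essentially an unwinding of the definition on representatives.

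First I would handle identities. For any $\X \in \categoria{K}_R$, applying the formula of Lemma~\ref{lem:ext-F} to $\mathrm{id}_\X:\X\to\X$ gives
\[
F(\mathrm{id}_\X)(\clase{t(\bar{x})}/\theta_\X) = \clase{t(\mathrm{id}_\X(\bar{x}))}/\theta_\X = \clase{t(\bar{x})}/\theta_\X,
\]
so $F(\mathrm{id}_\X) = \mathrm{id}_{F\X}$.

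Next I would handle composition. Given morphisms $f:\X\to\Y$ and $g:\Y\to\modelo{Z}$ in $\categoria{K}_R$, a direct application of the formula yields
\[
F(g\circ f)(\clase{t(\bar{x})}/\theta_\X) = \clase{t(g(f(\bar{x})))}/\theta_{\modelo{Z}},
\]
while evaluating in two steps gives
\[
(Fg\circ Ff)(\clase{t(\bar{x})}/\theta_\X) = Fg(\clase{t(f(\bar{x}))}/\theta_\Y) = \clase{t(g(f(\bar{x})))}/\theta_{\modelo{Z}},
\]
so the two morphisms agree on every equivalence class and hence $F(g\circ f) = Fg\circ Ff$.

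I do not anticipate any real obstacle here: the delicate issue---that the formula on representatives descends to a well-defined homomorphism between the quotients $F\X$, $F\Y$, $F\modelo{Z}$---was already settled in Lemma~\ref{lem:ext-F} via the Mal'cev--Gr\"{a}tzer characterization of compact congruences. The present argument is purely formal, amounting to an unwinding of definitions on the chosen term representatives.
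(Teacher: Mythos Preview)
Your proposal is correct and follows essentially the same approach as the paper: both verify the two functor axioms by direct computation using the explicit formula for $Ff$ from Lemma~\ref{lem:ext-F}, unwinding the definition on term representatives. The only cosmetic difference is that the paper treats composition first and identities second, whereas you do the reverse.
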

\begin{proof}
  Given $\X,\Y,\mathbf{Z}\in\categoria{K}_{R}$, and $f:\X\rightarrow \Y$,
  $g:\Y\rightarrow \mathbf{Z}$ $\categoria{K}_{R}$-morphisms, we show that
  $Fg\circ Ff=F(g\circ f)$. Let $x\in F\X$ and assume 
  $x=\clase{t(\bar{x})}/\theta_{\X}$ for some $t$ and $\bar{x}$. We have
  \[
    (Fg\circ
    Ff)(x)=Fg(\clase{t(f(\bar{x}))}/\theta_{\Y})=t(gf(\bar{x}))/R_Z)/\theta_Z)=F(g\circ
    f)(x).
  \]

  Now let $\X\in\KR$; we show that $F1_{\X}=1_{F\X}$. 
  Take $x=\clase{t(\bar{x})}/\theta_{\X}\in F\X$. We have that
  \[
    F1_{\X}(x)=\clase{t(1_{\X}(\bar{x}))}/\theta_{\X}=\clase{t(\bar{x})}/\theta_{\X}=x. \qedhere
  \]
\end{proof}
Consider now the map $\phi_{\X,\A}:
\categoria{K}_{\mathrm{alg}}(F\X,\A)\to\categoria{K}_{R}(\X,U\A)$  between
homsets given by $\phi_{\X,\A}(f)=Uf\circ \eta_{\X}$.

\begin{theorem}\label{th:adjunction-general}
  The triple $\langle F, U, \phi \rangle$ is an adjunction from $\categoria{K}_{R}$ to $\categoria{K}_{\mathrm{alg}}$.
\end{theorem}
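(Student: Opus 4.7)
The plan is to verify that $\phi_{\X,\A}$ is a bijection, natural in $\X$ and $\A$—the standard hom-set characterization of an adjunction with unit $\eta$. First I would check that $\eta_\X\colon\X\to UF\X$ is indeed a $\categoria{K}_R$-morphism, so that $\phi_{\X,\A}$ takes values in $\categoria{K}_R(\X,U\A)$: for $\langle x_1,\dots,x_n\rangle\in R^\X$, the pairs $\langle\clase{t_i(\bar{x})},\clase{s_i(\bar{x})}\rangle$ are by definition among the generators of $\theta_\X$, so $t_i^{F\X}(\eta_\X(\bar{x}))=s_i^{F\X}(\eta_\X(\bar{x}))$ for each $i$, which unpacks to $\langle\eta_\X(\bar{x})\rangle\in R^{UF\X}$.

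For bijectivity of $\phi_{\X,\A}$, injectivity is clear, since $\eta_\X(X)=\{\clase{x}/\theta_\X:x\in X\}$ generates $F\X$ as a $\K_{\mathrm{alg}}$-algebra, so any pair of $\categoria{K}_{\mathrm{alg}}$-morphisms out of $F\X$ that coincide on $\eta_\X(X)$ must be equal. For surjectivity, given $h\colon\X\to U\A$, I would apply Lemma~\ref{lem:proy-lib} to $U\A\in\categoria{K}_R$ and $\A$ to obtain a morphism $\epsilon_\A\colon FU\A\to\A$ with $\epsilon_\A(\eta_{U\A}(x))=x$ for every $x\in A$, and then set $g\defi\epsilon_\A\circ Fh\colon F\X\to\A$. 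Using Lemma~\ref{lem:adjunto} (which gives $UFh\circ\eta_\X=\eta_{U\A}\circ h$),
\[
Ug\circ\eta_\X=U\epsilon_\A\circ UFh\circ\eta_\X=U\epsilon_\A\circ\eta_{U\A}\circ h=\mathrm{id}_{U\A}\circ h=h,
\]
so $\phi_{\X,\A}(g)=h$. A more direct alternative would be to extend $h\colon X\to A$ to a $\K_{\mathrm{alg}}$-morphism $\bar{h}\colon\bF_\K(X)\to\A$ by the universal property of the free algebra, and then argue that $\bar{h}$ factors through $\theta_\X$ because $h$ preserves $R$ and $R$ is defined by identities.

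Finally, naturality in $\X$ is essentially a restatement of Lemma~\ref{lem:adjunto}: for $k\colon\Y\to\X$ in $\categoria{K}_R$ and $f\colon F\X\to\A$,
\[
\phi_{\Y,\A}(f\circ Fk)=Uf\circ UFk\circ\eta_\Y=Uf\circ\eta_\X\circ k=\phi_{\X,\A}(f)\circ k,
\]
and naturality in $\A$ is immediate from $U$ being a functor: for $q\colon\A\to\B$, $\phi_{\X,\B}(q\circ f)=Uq\circ Uf\circ\eta_\X=Uq\circ\phi_{\X,\A}(f)$. The one conceptual hurdle—that any $R$-preserving $h\colon\X\to U\A$ extends to a $\K_{\mathrm{alg}}$-morphism out of $F\X$—has in fact already been overcome by the previous lemmas: it is Lemma~\ref{lem:proy-lib} combined with the compatibility square of Lemma~\ref{lem:adjunto}. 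What remains is essentially assembling these pieces, so I do not expect genuine obstacles beyond notational bookkeeping.
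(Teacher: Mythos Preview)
Your proposal is correct and follows essentially the same route as the paper: injectivity via the fact that $\eta_\X(X)$ generates $F\X$, surjectivity by composing $Fh$ with the map from Lemma~\ref{lem:proy-lib}, and naturality via Lemma~\ref{lem:adjunto} and functoriality of $U$. The one small addition you make---explicitly checking that $\eta_\X$ preserves $R$ so that $\phi_{\X,\A}$ actually lands in $\categoria{K}_R(\X,U\A)$---is a point the paper leaves implicit, so your version is in fact slightly more complete.
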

\begin{proof}
  We first prove that $\phi_{\X,\A}$ is injective. Let $g: F\X \rightarrow \A$
  and $h: F\X \rightarrow \A$ be two $\KF$-morphisms such that $\phi_{\X,\A} (h)
  = \phi_{\X,\A} (g)$, i.e.,
  \[
    Uh(\eta_{\X}(x)) = (Uh \circ \eta_{\X}) (x) = (Ug \circ \eta_{\X}) (x) =
    Ug(\eta_{\X}(x))
  \]
  for all $x \in \X$. From this we deduce that $g$ and $h$ coincide on all elements
  of $F\X$ of the form $\clase{x}/\theta_{\X} = \eta_{\X}(x)$: We have that
  $h(\eta_{\X}(x))$ equals $Uh(\eta_{\X}(x))$ since $U$ preserves morphisms as
  functions. By hypothesis, $Uh(\eta_{\X}(x)) = Ug(\eta_{\X}(x))$, the latter
  being equal to $g(\eta_{\X}(x))$ for the same reason. Therefore, $h(\eta_{\X}(x)) =
  g(\eta_{\X}(x))$ for all $x \in \X$. Now, let $y \in F\X$ be of the form $x =
  \clase{t(x_1,\dots,x_k)}/\theta_{\X}$. Since $g$ and $h$ are morphisms, we
  have:
  \begin{align*}
    g(y)&=g(\clase{t(x_1,\dots,x_k)}/\theta_{\X})\\
    &=t^\A\bigl(g(\clase{x_1}/\theta_{\X}),\dots,g(\clase{x_k}/\theta_{\X})\bigr)\\
    &=t^\A\bigl(h(\clase{x_1}/\theta_{\X}),\dots,h(\clase{x_k}/\theta_{\X})\bigr)\\
    &=h(\clase{t(x_1,\dots,x_k)}/\theta_{\X})\\
    &=h(y),
  \end{align*}
  which means that $g = h$.

  Next we show that $\phi_{\X,\A}$ is surjective.  Let $f:\X\rightarrow U\A$ be
  a $\categoria{K}_{R}$-morphism. Take $Ff:F\X\rightarrow FU\A$ the
  $\categoria{K}_{\mathrm{alg}}$-morphism given by Lemma~\ref{lem:ext-F}, and compose it
  with the morphism $g:FU\A\rightarrow \A$ given by
  Lemma~\ref{lem:proy-lib}. Now we have $\phi_{\X,\A}(g\circ Ff)(x)=U(g\circ
  Ff)\circ \eta_{\X} (x)$. Since $U$ preserves morphisms as functions, this
  element is equal to $g(Ff (\eta_{\X}(x)))$ which, by the definition of $Ff$,
  is equal to $g(\eta_{U\A}(f(x)))$, and this last term is equal to $f(x)$ by
  construction of $g$. Therefore, $f=\phi_{\X,\A}(g\circ Ff)$.

  Now let's see that $\phi$ satisfies the naturality conditions. That is, we
  want to show that for every pair of objects $\X\in\categoria{K}_{R}$ and
  $\A\in\categoria{K}_{\mathrm{alg}}$, every
  $\categoria{K}_{\mathrm{alg}}$-morphism $f:F\X\rightarrow \A$, every
  $\categoria{K}_{R}$-morphism $h:\Y\rightarrow \X$, and every
  $\categoria{K}_{\mathrm{alg}}$-morphism $k:\A\rightarrow \B$, it holds that
  $\phi_{\X,\B}(k\circ f)=Uk\circ \phi_{\X,\A}(f)$ and $\phi_{\Y,\A}(f\circ
  Fh)=\phi_{\X,\A} f\circ h$.

  On the one hand, we have $\phi_{\X,\B}(k\circ f)=U(k\circ f)\circ\eta_{\X}=Uk\circ
  Uf\circ\eta_{\X}=Uk\circ \phi_{\X,\A} (f)$, as desired. 

  On the other hand, we have $\phi_{\Y,\A}(f\circ Fh)=U(f\circ
  Fh)\circ\eta_{\Y}=Uf\circ UFh\circ\eta_{\Y}=Uf\circ\eta_{\X}\circ h=\phi_{\X,\A} (f)\circ
  h$, since $UFh\circ\eta_{\Y}=\eta_{\X}\circ h$ by Equation~\ref{eq:adjunto} from Lemma~\ref{lem:adjunto}.

  Therefore, the naturality conditions are satisfied. Thus, $\phi$ is a natural bijection between $\categoria{K}_{\mathrm{alg}}(F\X,\A)$ and $\categoria{K}_{R}(\X,U\A)$, and hence the triple $\langle F,U,\phi\rangle$ forms an adjunction.
\end{proof}

\begin{corollary}\label{cor:adjunction-RRB-AP}
Let $\categoria{RRB}$ and $\categoria{AP}$ be the categories of RRBs with semigroup homomorphisms and
associative posets with non decreasing maps,
respectively. Then the forgetful functor $U: \categoria{RRB}\to \categoria{AP}$ admits a left
adjoint.
\end{corollary}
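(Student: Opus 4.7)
The plan is to derive this corollary as a direct instantiation of Theorem~\ref{th:adjunction-general} using the setup of Example~\ref{ex:r_def}. The work reduces to verifying that the hypotheses of the theorem apply and that the categories $\categoria{RRB}$ and $\categoria{AP}$ match $\categoria{K}_{\mathrm{alg}}$ and $\categoria{K}_R$ under that instantiation.

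First, I would fix the data. Take $L=\{\cdot,\leq\}$, so that $L_{\mathrm{alg}}=\{\cdot\}$, and let $\variedad{K}$ be the class of right posemigroups. Example~\ref{ex:r_def} records that $R={\leq}$ is definable by a single identity, namely $x_1\leq x_2 \iff x_1\cdot x_2 = x_1$, so the definability-by-conjunction-of-identities condition from Definition~\ref{def:definable-by-conj-ids} is satisfied. The algebraic reduct $\variedad{K}_{\mathrm{alg}}$ is exactly the class of RRBs, and this is indeed a variety (axiomatized by associativity, idempotence and the identity $a\cdot b\cdot a = b\cdot a$), so the variety hypothesis of Theorem~\ref{th:adjunction-general} holds. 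The relational reduct $\variedad{K}_R$ is, by the very definition of associative posets given in Section~\ref{sec:preliminaries}, the class of associative posets.

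Second, I would check that the categorical structures coincide. The morphisms in $\categoria{K}_{\mathrm{alg}}$ are $L_{\mathrm{alg}}$-homomorphisms, i.e., semigroup homomorphisms, so $\categoria{K}_{\mathrm{alg}}$ is $\categoria{RRB}$. The morphisms in $\categoria{K}_R$ are $L_R$-homomorphisms, i.e., maps $f$ satisfying $x\leq y \Rightarrow f(x)\leq f(y)$, which are precisely the non-decreasing maps; hence $\categoria{K}_R=\categoria{AP}$. Finally, the forgetful functor described in Section~\ref{sec:an-adjunction} sends an RRB $\mathbf{A}=(A,\cdot)$ to $(A,R^{\mathbf{A}})$ where $\langle a,b\rangle\in R^{\mathbf{A}}$ iff $a\cdot b = a$; by (\ref{eq:order_of_RRB}) this is the underlying associative poset of $\mathbf{A}$, so this functor agrees with the $U$ of the corollary.

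With these identifications in place, Theorem~\ref{th:adjunction-general} directly supplies a left adjoint $F:\categoria{AP}\to\categoria{RRB}$. There is essentially no obstacle: the entire content of the proof is the observation that we are in a particular instance of the general framework, so the corollary follows from a one-line invocation of the theorem once the hypotheses have been checked.
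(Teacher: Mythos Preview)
Your proposal is correct and matches the paper's approach exactly: the paper's proof is the single line ``By Theorem~\ref{th:adjunction-general} and Example~\ref{exm:posemigroups-as-defd-by-conj},'' and you have simply spelled out the verifications that justify this invocation.
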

\begin{proof}
  By Theorem~\ref{th:adjunction-general} and Example~\ref{exm:posemigroups-as-defd-by-conj}.
\end{proof}
\subsection{Examples}
\label{sec:examples}

We now present some examples. In these examples, we omit the subscript in $\eta_\mathbf{X}$, as it will be clear from context.
\begin{example}\label{example:1}
	In Figure~\ref{fig:adjunto} we can see an example of the partial order
        underlying $F\mathbf{P}$ for a simple poset $\mathbf{P}=(P,\leq)$. Each
        product of the two maximal elements in $UF\mathbf{P}$ is one of the two
        elements in the middle level (immediately below the second factor).
	\begin{figure}[h] 
		\begin{center}
			\begin{tabular}{c@{\hspace{4em}}c}
				\begin{tikzpicture}
					[>=latex, thick,
					nodo/.style={thick,minimum size=0cm,inner sep=0cm}]

					\node (x) at (0,1) [label=left:$x$] [nodo] {$\bullet$};
					\node (0) at (1,0) [label=right:$0$] [nodo] {$\bullet$};
					\node (y) at (2,1)  [label=right:$y$] [nodo] {$\bullet$};
					\draw [-] (x) edge  (0);
					\draw [-] (y) edge  (0);
					
				\end{tikzpicture}
				&
				\begin{tikzpicture}
					[>=latex, thick,
					nodo/.style={thick,minimum size=0cm,inner sep=0cm}]

					\node (x) at (0,1) [label=left:$\eta(x)$]    [nodo] {$\bullet$};
					\node (b) at (2,0)  [nodo] {$\bullet$};
					\node (0) at (1,-1) [label=right:$\eta(0)$]  [nodo] {$\bullet$};
					\node (a) at (0,0)  [nodo] {$\bullet$};
					\node (y) at (2,1)  [label=right:$\eta(y)$]  [nodo] {$\bullet$};
					\draw [-] (x) edge  (a);
					\draw [-] (y) edge  (b);
					\draw [-] (a) edge  (0);
					\draw [-] (b) edge  (0);
					
				\end{tikzpicture} \\
				$\mathbf{P}$ & $UF\mathbf{P}$
			\end{tabular}
		\end{center}
		\caption{}\label{fig:adjunto}
	\end{figure}
\end{example}

\begin{example}\label{exm:equiv-relations}
In this example we show that an appropriate expansion $\mathcal{K}$ of the class
of right regular bands has $\mathcal{K}_R$ equal to the class of sets equipped
with some equivalence relation.

Let $L := \{\cdot, \theta\}$ and let $\mathcal{K}$ be the class satisfying all the right regular band axioms together with the additional condition
\[
\forall x,y,\quad (x \mathrel{\theta} y) \Longleftrightarrow (x \cdot y = y) \wedge (y \cdot x = x).
\]
We have that $\mathcal{K}_{\text{alg}}$ is, once again, the class of
right regular bands. It can be proved that $\theta$ is always an equivalence
relation. 

Moreover, let $\delta$ be an equivalence relation over a set $S$. Index the equivalence classes of $\delta$ by a linearly ordered set $I$ and denote them by $\{x_i \mid i \in I\}$. Define an RRB operation $\cdot_i$ on each $x_i$ by
\[
a \cdot_i b := b \quad \text{for all } a,b \in x_i.
\]
Now the ordered sum of the $x_i$ admits an RRB operation \cite[Lemma~4.1]{2024arXiv240407877K}. It is clear that $a \cdot b = b \wedge b \cdot a = a$ holds if and only if $a,b \in x_i$ for some $i \in I$.

To illustrate this, we consider the three-element RRB $\mathbf{A}$ given by the
multiplication table in Figure~\ref{fig:2} (observe that the partial order
underlying $\mathbf{A}$ is isomorphic to the poset $\mathbf{P}$ from the
previous example). We have that $U\mathbf{A}$ is the set $\{a,b,0\}$ together
with the equivalence relation given by the partition $\{\{a,b\},\{0\}\}$. We
also provide the Hasse diagram describing the partial order underlying
$FU\mathbf{A}$. Keep in mind that this partial order holds no relation to
the original partial order.
      
\begin{figure}
	\centering
	\resizebox{\textwidth}{!}{
	\begin{tabular}[t]{c@{\hspace{2em}}c@{\hspace{2em}}c}
		\begin{tabular}{c|ccc}
			$·$ & $0$ & $a$ & $b$\\\hline
			$0$ & $0$ & $0$ & $0$\\
			$a$ & $0$ & $a$ & $b$\\
			$b$ & $0$ & $a$ & $b$
		\end{tabular}
		&
		\begin{tabular}{c|ccccccc}
			$·$ &  $a$ & $b$ & $c$ & $w$ & $x$ & $y$ & $z$ \\\hline
			$a$ & $a$ & $b$ & $y$ & $w$  & $x$ & $y$ & $z$ \\
			$b$ & $a$ & $b$ & $z$ & $w$ & $x$ & $y$ & $z$ \\
			$c$ & $w$ & $x$ & $c$ & $w$ & $x$ & $y$ & $z$ \\
			$w$ & $w$ & $x$ & $y$ & $w$ & $x$ & $y$ & $z$ \\
			$x$ & $w$ & $x$ & $z$ & $w$ & $x$ & $y$ & $z$ \\
			$y$ & $w$ & $x$ & $y$ & $w$ & $x$ & $y$ & $z$ \\
			$z$ & $w$ & $x$ & $z$ & $w$ & $x$ & $y$ & $z$
		\end{tabular} & \begin{tikzpicture}[>=latex, thick,
		nodo/.style={thick,minimum size=0cm,inner sep=0cm}]

		\node (a0) at (0,0) [nodo] {$\bullet$};
		\node (a1) at (0,1) [label=above:$\eta(a)$] [nodo] {$\bullet$};
		\draw (a0) -- (a1);

		\node (b0) at (1,0) [nodo] {$\bullet$};
		\node (b1) at (1,1) [label=above:$\eta(b)$] [nodo] {$\bullet$};
		\draw (b0) -- (b1);

		\node (0) at (2.5,1) [label=above:$\eta(0)$] [nodo] {$\bullet$};
		\node (c) at (2,0)  [nodo] {$\bullet$};
		\node (w) at (3,0)  [nodo] {$\bullet$};
		\draw (0) -- (c);
		\draw (0) -- (w);
		\end{tikzpicture}
		 \\
		$\mathbf{A}$ & $FU\mathbf{A}$ & The partial order underlying $FU\mathbf{A}$
	\end{tabular}
}
\caption{}\label{fig:2}
\end{figure}

\end{example}

\begin{example}
	Let $L:= \{\cdot, \leq\}$ and $\mathcal{K}$ the class of commutative right posemigroups. $\mathcal{K}_{\text{alg}}$ is the class of meet-semilattices (as algebras) and $\mathcal{K}_R$ is the class of meet-semilattices (as posets).

	Let $\mathbf{B}$ be the poset $\mathbf{P}$ in Example~\ref{example:1}, regarded as a meet-semilattice.
	Then $U\mathbf{B}$ is that same poset and $FU\mathbf{B}$ is the four element meet-semilattice given by the Hasse diagram in Figure~\ref{fig:FUB}.
	  \begin{figure}[h]
		\begin{center}
			\begin{tikzpicture}
				[>=latex, thick,
				nodo/.style={thick,minimum size=0cm,inner sep=0cm}]
				
				\node (x) at (-.5,2) [label=left:$a$] [nodo] {$\bullet$};
				\node (y) at (.5,2) [label=left:$b$] [nodo] {$\bullet$};
				\node (c) at (0,1) [label=left:$c$][nodo] {$\bullet$};   
				\node (0) at (0,0) [label=left:$0$][nodo] {$\bullet$}; 
				\draw [-] (x) edge  (c);
				\draw [-] (y) edge  (c);
				\draw [-] (0) edge  (c);
			\end{tikzpicture}
			\caption{$FU\mathbf{B}$}\label{fig:FUB}
		\end{center}
	\end{figure}
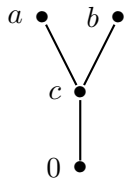
Observe that $\mathbf{B}$ can also be thought of as an RRB, since semilattices
are a subvariety of the variety of RRB's. In that case, we would obtain the
(different) $F\mathbf{P}$ from Example~\ref{example:1}. This shows that the
definition of the functor $F$ depends not only on the structure to which it is
applied but also on the equational class.
\end{example}
\begin{example}\label{exm:distr-complement}
	Let $L := \{\wedge, \vee, 0, 1, C\}$ and $K$ the class of bounded distributive lattices, along with the binary relation defined by ``being a pair of complemented elements''. That is 
	\[
	\mathcal{K} \models \forall x,y, C(x,y)\iff (x\wedge y = 0) \;\&\; (x\vee y=1)
	\] 
Note that this relation is symmetric and irreflexive in every non-trivial lattice. Therefore, it can be represented as a graph. In this example, we consider the three-element chain as a lattice and show, in Figure~\ref{fig:4}, the graph corresponding to $U\mathbf{C}$ alongside the Hasse diagram of $FU\mathbf{C}$.
	
		\begin{figure}[h] 
		\begin{center}
			\begin{tabular}{c@{\hspace{4em}}c@{\hspace{4em}}c}
				\begin{tikzpicture}
					[>=latex, thick,
					nodo/.style={thick,minimum size=0cm,inner sep=0cm}]

					\node (1) at (0,2) [label=left:$1$] [nodo] {$\bullet$};
					\node (x) at (0,1) [label=right:$x$] [nodo] {$\bullet$};
					\node (0) at (0,0)  [label=left:$0$] [nodo] {$\bullet$};
					\draw [-] (x) edge  (0);
					\draw [-] (x) edge  (1);
					
				\end{tikzpicture}
				&
				\begin{tikzpicture}
					[>=latex, thick,
					nodo/.style={thick,minimum size=0cm,inner sep=0cm}]

					\node (1) at (0,1) [label=left:$1$] [nodo] {$\bullet$};
				\node (x) at (0.5,0) [label=right:$x$] [nodo] {$\bullet$};
				\node (0) at (1,0.8)  [label=right:$0$] [nodo] {$\bullet$};
				\draw [-] (1) edge  (0);

				\end{tikzpicture} 
				&
					\begin{tikzpicture}[>=latex, thick,
						nodo/.style={thick,minimum size=0cm,inner sep=0cm}]

						\node (00) at (0,0) [nodo] {$\bullet$};

						\node (10) at (-1,1) [nodo] {$\bullet$};
						\node (01) at (1,1)  [nodo] {$\bullet$};

						\node (20) at (-2,2) [label=left:$\eta(0)$] [nodo] {$\bullet$};
						\node (11) at (0,2)  [label=right:$\eta(x)$][nodo] {$\bullet$};
						\node (02) at (2,2) [label=right:$\eta(1)$] [nodo] {$\bullet$};

						\node (21) at (-1,3)  [nodo] {$\bullet$};
						\node (12) at (1,3) [nodo] {$\bullet$};

						\node (22) at (0,4) [nodo] {$\bullet$};

						\draw (00) -- (10);
						\draw (00) -- (01);
						
						\draw (10) -- (20);
						\draw (10) -- (11);
						
						\draw (01) -- (11);
						\draw (01) -- (02);
						
						\draw (20) -- (21);
						\draw (11) -- (21);
						\draw (11) -- (12);
						\draw (02) -- (12);
						
						\draw (21) -- (22);
						\draw (12) -- (22);
						
					\end{tikzpicture}
					
				\\
				$\mathbf{C}$ & $U\mathbf{C}$ & $FU\mathbf{C}$
			\end{tabular}
		\end{center}
		\caption{}\label{fig:4}
	\end{figure}
Observe that any graph obtained in this way necessarily includes a two-vertex
component (namely, the one formed by the vertices corresponding to $0$ and
$1$).
\end{example}

\subsection{SAFT applicability}

Although one might expect a result such as Corollary~\ref{cor:adjunction-RRB-AP}
to follow from general categorical principles (such as the Special Adjoint
Functor Theorem \cite[Sect.~V.8]{lane1998categories}), this is not the case as we now show. Specifically, the
adjunction we construct cannot be derived from that theorem, as the category of
RRBs lacks a cogenerating set, a situation common to residually large idempotent
varieties. For completeness, we include a proof of this last fact.
\begin{lemma}
  Let $\mathcal{V}$ a idempotent variety. If $\mathcal{V}$ has a cogenerating
  set, then it is residually small.
\end{lemma}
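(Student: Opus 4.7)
The plan is to translate the categorical cogenerator hypothesis into a point-separation property for members of $\mathcal{V}$, and then use the monolith of a subdirectly irreducible algebra to force injectivity into some cogenerator. The idempotency hypothesis is exactly what enables the first step.

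First I would observe that in any idempotent variety $\mathcal{V}$ the one-element algebra $\mathbf{1}$ exists, and that for every $B \in \mathcal{V}$ the homomorphisms $\mathbf{1} \to B$ are in bijection with the elements of $B$. Indeed, a map $* \mapsto b$ respects a basic operation $\sigma$ iff $\sigma^{B}(b,\dots,b) = b$, which is automatic since the identity $\sigma(x,\dots,x) \approx x$ holds in $\mathcal{V}$. Consequently, if $\mathcal{S}$ is a cogenerating set and $a \neq b$ are two distinct elements of some $B \in \mathcal{V}$, the two associated morphisms $\mathbf{1} \to B$ are distinct, so the cogenerator property furnishes $S \in \mathcal{S}$ and $h : B \to S$ with $h(a) \neq h(b)$. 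In other words, $\mathcal{S}$ separates points in every member of $\mathcal{V}$.

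Second, I would invoke the monolith to control subdirectly irreducibles. Let $A \in \mathcal{V}$ be subdirectly irreducible with monolith $\mu$, and choose $(a,b) \in \mu$ with $a \neq b$. Point separation gives some $S \in \mathcal{S}$ and $h : A \to S$ with $h(a) \neq h(b)$, so $(a,b) \notin \ker h$ and therefore $\mu \not\subseteq \ker h$. Since $\mu$ is contained in every nontrivial congruence of $A$, we conclude that $\ker h$ is the identity congruence, i.e.\ that $h$ is injective, whence $|A| \leq |S|$. Setting $\kappa := \sup_{S \in \mathcal{S}} |S|$, which exists because $\mathcal{S}$ is a set, we have $|A| \leq \kappa$ for every subdirectly irreducible $A \in \mathcal{V}$, establishing residual smallness.

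The only delicate point is the first step: the passage from ``$\mathcal{S}$ cogenerates'' to ``$\mathcal{S}$ separates points'' relies on being able to realize each element of $B$ as a morphism out of a fixed algebra, and idempotency of $\mathcal{V}$ is what guarantees this for $\mathbf{1}$. Without idempotency this trick fails in general, which is precisely why the hypothesis is needed.
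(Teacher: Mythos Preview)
Your proof is correct. The opening move—using idempotency to identify $\Hom(\mathbf{1},B)$ with $B$ and thereby convert the cogenerating hypothesis into a point-separation property—is exactly the paper's key observation as well. Where you diverge is in the second half: the paper embeds an arbitrary $B$ into a product $\prod_{Q,g} Q$ of cogenerators, then invokes Birkhoff's subdirect representation of each $Q$ to obtain a fixed family $\{A_i\}$ of subdirectly irreducibles such that every $B$ is a subdirect product of subalgebras of the $A_i$, and only then concludes that any subdirectly irreducible must appear as such a subalgebra. Your route is more direct: once point separation is in hand, you go straight to a subdirectly irreducible $A$, use the monolith to force the separating map $h:A\to S$ to be an embedding, and read off the cardinality bound $|A|\leq\sup_{S\in\mathcal{S}}|S|$. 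Your argument avoids the detour through Birkhoff's theorem and the bookkeeping of iterated subdirect embeddings; the paper's version, on the other hand, yields the slightly stronger conclusion that every subdirectly irreducible embeds into a subalgebra of a subdirectly irreducible factor of some cogenerator, though this extra information is not needed for residual smallness.
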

\begin{proof}
Suppose there exists a cogenerating set $Q$. Let $\mathbf{B}\in \mathcal{V}$. Let $\Hom(\mathbf{B},\mathbf{Q})$ be the set of homomorphisms from $\mathbf{B}$ to $\mathbf{Q}$. Define $\mathbf{P}$ as \[ \mathop{\mathbf{P}:= \prod_{\mathbf{Q}\in Q}\prod_{g\in \Hom(\mathbf{B},\mathbf{Q})}\mathbf{Q}}\] and let $f:\mathbf{B}\rightarrow \mathbf{P}$ be defined coordinate-wise as $f(x)_{\mathbf{Q},g} = g(x)$. We claim that $f$ is an embedding from $\mathbf{B}$ into $\mathbf{P}$. Indeed, let $x,y \in \mathbf{B}$ with $x\neq y$. Let $\{*\}$ be the trivial algebra and define $h,h':\{*\}\rightarrow \mathbf{B}$ as the homomorphisms with image $\{x\}$ and $\{y\}$ respectively. Since $Q$ is a cogenerating set, there exist $\mathbf{Q}\in Q$ and $g:\mathbf{B}\rightarrow \mathbf{Q}$ such that $g\circ h\neq g\circ h'$ hence $g(x)\neq g(y)$. Therefore, we have that $f(x)_{\mathbf{Q},g}\neq f(y)_{\mathbf{Q},g}$ and hence $f(x)\neq f(y)$.

By Birkhoff's Theorem there exists a suitable set and a subdirect embedding $\alpha_{\mathbf{Q}}: \mathbf{Q}\rightarrow \prod_{i\in I_{\mathbf{Q}}}\mathbf{A}_i$. Now, we can define a new subdirect embedding \[ \mathop{\alpha: \mathbf{P}\longrightarrow \prod_{\mathbf{Q}\in Q}\prod_{ g\in \Hom(\mathbf{B},\mathbf{Q})} \prod_{ i\in I_{\mathbf{Q}}}\mathbf{A}_i}\] given by $\alpha(x)_{\mathbf{Q},g,i}=\alpha_{\mathbf{Q}}(x_{\mathbf{Q},g})_i$. Define $\mathcal{S}_{\mathbf{Q},g,i}$ to be the image of $\pi_{\mathbf{Q},g,i}\circ \alpha \circ f$ where\[ \mathop{\pi_{\mathbf{Q},g,i}: \prod_{\mathbf{Q}\in Q}\prod_{ g\in \Hom(\mathbf{B},\mathbf{Q})} \prod_{j\in I_{\mathbf{Q}}}\mathbf{A}_j}\longrightarrow \mathbf{A}_i\] is the canonical projection for all $\mathbf{Q}\in Q$, $g\in \Hom(\mathbf{B},\mathbf{Q})$, $i\in I_{\mathbf{Q}}$. Note that each $\mathcal{S}_{\mathbf{Q},g,i}$ is a substructure of $\mathbf{A}_i$ and that the function \[ \mathop{\beta: b \longrightarrow \prod_{ \mathbf{Q}\in Q}\prod_{ g\in \Hom(\mathbf{B},\mathbf{Q})}\prod_{ i\in I_{\mathbf{Q}}}}\mathcal{S}_{\mathbf{Q},g,i}\] given by $\beta(x)_{\mathbf{Q},g,i}=(\pi_{\mathbf{Q},g,i}\circ \alpha \circ f)(x)$ is a subdirect embedding.

By the previous discussion, we have that each $\mathbf{B}\in \mathcal{V}$ is isomorphic to a subdirect product of algebras in $\mathcal{S}(\{\mathbf{A}_i\mid \mathbf{Q}\in Q, i\in I_{\mathbf{Q}}\})$ where this last family is the set of substructures of some $\mathbf{A}_i$ with $\mathbf{Q}\in Q$ and $i\in I_{\mathbf{Q}}$. Therefore, every subdirectly irreducible algebra must be isomorphic to some member in  $\mathcal{S}(\{\mathbf{A}_i\mid \mathbf{Q}\in Q, i\in I_{\mathbf{Q}}\})$. This shows that $\mathcal{V}$ must be residually small.
\end{proof}

\begin{lemma}\label{lem:RRB-large}
  The variety of RRBs is residually large.
\end{lemma}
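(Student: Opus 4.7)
The plan is to exhibit, for every infinite cardinal $\kappa$, a subdirectly irreducible (SI) RRB of cardinality at least $\kappa$.

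The simplest non-trivial SI RRB is the three-element algebra $\mathbf{A}$ from Example~\ref{exm:equiv-relations} (Figure~\ref{fig:2}). A direct check of its multiplication table shows that the only proper non-trivial congruence has blocks $\{0\}$ and $\{a,b\}$: any alternative identification propagates to the full congruence, since for instance $a\mathrel\theta 0$ yields $b=a\cdot b\mathrel\theta 0\cdot b=0$. Hence $\mathbf{A}$ is SI with monolith $\Cg(a,b)$, serving as the base case.

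For general $\kappa$, my plan is to fix a well-ordered set $(I,\prec)$ of cardinality $\kappa$ with minimum element $i_0$, and construct an RRB $\mathbf{B}_\kappa$ on the underlying set $B=\{0\}\cup\{a_i,b_i:i\in I\}$ by making $0$ absorbing and each $\{a_i,b_i\}$ a right-zero sub-band (so $a_i\cdot b_i=b_i$, $b_i\cdot a_i=a_i$, $a_i^2=a_i$, $b_i^2=b_i$). The remaining cross-level products $a_i\cdot a_j$, $a_i\cdot b_j$, $b_i\cdot a_j$, $b_i\cdot b_j$ for $i\prec j$ (and their reversed counterparts) should be defined using the well-order, guided by two requirements: the RRB identities $x^2=x$ and $xyx=yx$ must hold for all triples, and any congruence identifying two distinct elements at level $i$ must force the identification $(a_{i_0},b_{i_0})$ at the base level.

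The main task is the congruence analysis. One proceeds by case analysis on which pair of distinct elements is first identified, using Malcev-style computations (cf.\ Lemma~\ref{malsev}) to show that the cross-level products propagate any non-trivial collapse downward through $\prec$, eventually producing $a_{i_0}\mathrel\theta b_{i_0}$. The main obstacle is the precise design of the cross-level products: natural candidates---null products, a single right-zero extension, or an ordered sum of copies of $\mathbf{A}$---each leave the congruence lattice with one independent atom per level, hence fail to be SI. A working construction requires asymmetric cross-level products that use the well-order to route identifications towards $i_0$. Once verified, $\mathbf{B}_\kappa$ is SI with monolith $\Cg(a_{i_0},b_{i_0})$ and has cardinality $1+2\kappa=\kappa$, showing that the variety of RRBs is residually large.
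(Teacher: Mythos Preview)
Your proposal is a plan rather than a proof, and the missing piece is precisely the hard part. You correctly observe that the obvious candidates for the cross-level products (null products, right-zero extensions, ordered sums of copies of $\mathbf{A}$) all fail to produce a subdirectly irreducible RRB, because each level contributes its own congruence atom. But you never actually supply a definition of the cross-level products that works, nor do you verify associativity, the identity $xyx=yx$, or the congruence-propagation property for any concrete choice. Saying that ``a working construction requires asymmetric cross-level products that use the well-order to route identifications towards $i_0$'' is a statement of what you would need, not evidence that such a thing exists within your framework of an absorbing $0$ plus two-element right-zero bands at each level. As it stands, the argument has no content beyond the base case.

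The paper takes a completely different route: it imports a known construction of Gerhard \cite{Ger2}, building a semigroup of self-maps of an arbitrary set $X$ (constants $a_i$ together with near-constant idempotents $b_i,c_i$), with multiplication $a\cdot b=b\circ a$. The RRB axioms and closure under composition are checked by a short case analysis, and subdirect irreducibility follows from Gerhard's characterization (Theorem~\ref{teo:subdirect}) by showing $\theta(a_0,a_1)\subseteq\theta(a_i,a_j)$ for all $i\neq j$. This construction has no absorbing zero and is not organized as a tower over a well-order; it is a single flat layer of constants with auxiliary elements that force all constant-pair congruences to dominate a fixed one. If you want to rescue your approach, you would need either to exhibit the cross-level products explicitly and carry out all verifications, or to abandon the tower picture and look instead for a construction where many elements are linked to a single distinguished pair by one-step multiplications, as in the paper's proof.
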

The proof presented here follows an example given in \cite{Ger2}. To proceed, we will rely on an auxiliary result, whose proof is provided in the cited source. We begin by defining two fundamental concepts.
\begin{definition}
\begin{itemize}
    \item A \emph{zero} of a semigroup $\mathbf{S}$ is an element $0\in \mathbf{S}$ which satisfies $0\cdot s = s\cdot 0 = 0$
    for all $s\in \mathbf{S}$
    \item Given a semigroup $\mathbf{S}= (S, \cdot)$, its dual $\mathbf{S}^*$ is the semigroup with multiplication given by $x\tilde{\cdot} y = y\cdot x$ for all $x,y\in S$.
\end{itemize}
    \end{definition}
\begin{theorem}\label{teo:subdirect}
    An idempotent semigroup $S$ (without zero) is subdirectly irreducible if and only if $\mathbf{S}$ or its dual $\mathbf{S}^*$ is isomorphic to a semigroup $\mathbf{T}$ which satisfies the following two conditions:
 \begin{itemize}

	\item $C(X) \subseteq \mathbf{T} \subseteq X^X$, where $X^X$ is the semigroup of all mappings of $X$ into itself, $C(X)$ is the set of all constant mappings of $X$, and $\mathbf{T}$ is a subsemigroup of $X^X.$
	\item There exist $k, k' \in C(X)$ such that $\theta(k,k')\subseteq \theta(c, d)$ for all $c, d \in C(X)$, $c\neq d$, where $\theta(x,y)$ denotes the smallest congruence containing $(x,y)$.\qed
\end{itemize}
\end{theorem}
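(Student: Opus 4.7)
The plan is to prove the biconditional in two stages. I begin by fixing the multiplicative convention on $X^X$ as composition in the order $(f\cdot g)(x):=f(g(x))$; the alternative convention would simply interchange the roles of $\mathbf{S}$ and $\mathbf{S}^*$, which is why the statement allows either of them. Under this convention a direct calculation yields $c_a\cdot f=c_a$ and $f\cdot c_a=c_{f(a)}$ for every $a\in X$ and $f\in X^X$. In particular, $C(X)$ is a left-zero band and is a two-sided ideal of any subsemigroup $\mathbf{T}$ with $C(X)\subseteq\mathbf{T}\subseteq X^X$.

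For the implication ($\Leftarrow$), I claim that any such $\mathbf{T}$ satisfying both conditions (with $k\neq k'$, which is implicit for the second condition to force subdirect irreducibility) is subdirectly irreducible with $\theta(k,k')$ as its monolith. Let $\rho$ be any nontrivial congruence on $\mathbf{T}$ and pick $s\mathrel{\rho} t$ with $s\neq t$. Choose $a\in X$ with $s(a)\neq t(a)$. Compatibility of $\rho$ gives $s\cdot c_a\mathrel{\rho} t\cdot c_a$, which by the formulas above reads $c_{s(a)}\mathrel{\rho} c_{t(a)}$, a pair of distinct constants lying in $\rho$. Hence $\theta(c_{s(a)},c_{t(a)})\subseteq\rho$, and the hypothesis then forces $\theta(k,k')\subseteq\theta(c_{s(a)},c_{t(a)})\subseteq\rho$. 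So $\theta(k,k')$ is a nontrivial congruence contained in every nontrivial congruence of $\mathbf{T}$, i.e.\ a monolith, and $\mathbf{T}$ is subdirectly irreducible. Since subdirect irreducibility is preserved under isomorphism and under passage to the dual (the congruence lattices of $\mathbf{S}$ and $\mathbf{S}^*$ coincide), the conclusion transfers to $\mathbf{S}$.

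For the implication ($\Rightarrow$), I would construct a faithful representation of $\mathbf{S}$ (or $\mathbf{S}^*$) inside $X^X$ for a suitable set $X$ and enlarge the image by adjoining $C(X)$. The natural candidate is the left regular representation $\lambda\colon s\mapsto\lambda_s$ with $\lambda_s(x)=sx$ and $X=S$, which is a homomorphism $\mathbf{S}\to X^X$ by associativity. One then verifies that $\mathbf{T}:=\{\lambda_s:s\in S\}\cup C(X)$ is closed under composition using $\lambda_s\cdot c_a=c_{sa}$ and $c_a\cdot\lambda_s=c_a$. Injectivity of $\lambda$ can fail when $\mathbf{S}$ has a nontrivial right annihilator, and this dichotomy is precisely what selects between $\mathbf{S}$ and $\mathbf{S}^*$; the ``without zero'' hypothesis excludes the totally degenerate case. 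Given the monolith $\mu=\theta^{\mathbf{S}}(a_0,b_0)$, the desired pair $k,k'\in C(X)$ is then obtained by multiplying the images of $a_0$ and $b_0$ by a suitably chosen constant on the correct side to land inside $C(X)$ while preserving their distinctness.

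The main obstacle is the last step in the ($\Rightarrow$) direction: verifying that the pair $k,k'$ arising from the monolith satisfies $\theta(k,k')\subseteq\theta(c,d)$ for \emph{every} pair of distinct constants in $C(X)$, not merely for those coming from the image of the representation. This demands a careful analysis of the congruence lattice of the enlarged $\mathbf{T}$ in relation to that of $\mathbf{S}$, exploiting the ideal structure of $C(X)\subseteq\mathbf{T}$ together with the absence of a zero to ensure that congruences on $\mathbf{T}$ pull back nontrivially through the representation; the precise construction, including whether $X$ must be strictly larger than $S$ and the exact handling of the $\mathbf{S}$/$\mathbf{S}^*$ dichotomy, is carried out in \cite{Ger2}.
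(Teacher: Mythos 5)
First, a point of comparison: the paper does not prove Theorem~\ref{teo:subdirect} at all --- it is imported verbatim from Gerhard \cite{Ger2} (hence the {\qedsymbol} on the statement and the remark that the proof ``is provided in the cited source''). Measured against that, your ($\Leftarrow$) direction is a genuine bonus: the computations $c_a\cdot f=c_a$ and $f\cdot c_a=c_{f(a)}$, the extraction of a pair of distinct constants $c_{s(a)}\mathrel{\theta}c_{t(a)}$ inside any nontrivial congruence, the identification of $\theta(k,k')$ as the monolith, and the observation that $\Con(\mathbf{S})=\Con(\mathbf{S}^*)$ are all correct, and you rightly note that $k\neq k'$ must be read into the hypothesis for the equivalence to be true. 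That half is complete and self-contained.

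The ($\Rightarrow$) direction, however, is not a proof, and you concede as much by deferring its core to \cite{Ger2} --- which is circular here, since the task was to supply what that citation contains. Two concrete problems. (a) Your criterion for failure of injectivity of $\lambda$ (``a nontrivial right annihilator'') is wrong: $\lambda_s=\lambda_t$ means $sx=tx$ for all $x$, which happens, e.g., in every right-zero band (where $\lambda$ collapses \emph{everything}) with no zero element in sight. The usable dichotomy is structural: in any band, $sx=tx$ and $xs=xt$ for all $x$ forces $s=t$ (take $x\in\{s,t\}$), so $\ker\lambda\cap\ker\rho=\Delta$ where $\rho$ is the right regular representation; if $\mathbf{S}$ is subdirectly irreducible with monolith $\mu$, both kernels cannot be nontrivial, and whichever is trivial selects between $\mathbf{S}$ and $\mathbf{S}^*$. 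You gesture at this but never establish it. (b) The obstacle you flag at the end is exactly where the naive plan breaks, not a loose end: in $\mathbf{T}=\lambda(S)\cup C(X)$ the constants absorb multiplication on one side ($c_u\cdot f=c_u$), so the trace of $\theta^{\mathbf{T}}(c_u,c_v)$ on $C(X)$ is only the equivalence closure of the one-sided translates $\{(c_{su},c_{sv}):s\in S\}\cup\{(c_u,c_v)\}$, whereas the monolith $\theta^{\mathbf{S}}(a_0,b_0)\subseteq\theta^{\mathbf{S}}(u,v)$ is generated by Mal'cev chains using \emph{two-sided} translates $x\mapsto sxt$. So ``$\mu$ is the monolith of $\mathbf{S}$'' simply does not imply $\theta^{\mathbf{T}}(k,k')\subseteq\theta^{\mathbf{T}}(c,d)$ for all distinct constants, and bridging this gap is precisely the nontrivial band-theoretic content of Gerhard's argument. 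As it stands, your proposal proves one implication and correctly diagnoses, but does not close, the other.
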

\begin{proof}[Proof of Lemma~\ref{lem:RRB-large}]
    We will show that subdirectly irreducible right regular bands of arbitrarily large cardinality exist. Fix an arbitrary set $X$ with two distinguished elements $0$ and $1$. Let $A$ consist of the following functions from $X$ into itself: 
\begin{itemize}
\item The mappings $\{a_i \mid i\in X\}$ are the constant mappings defined by $a_i(j)=i$. 
\item The mappings $\{b_i \mid i\in X, i\notin\{0,1\}\}$ are defined by $b_i(i)=i$, $b_i(j)=0$ if $i\neq j$. 
\item The mappings $\{c_i \mid i\in X, i\notin\{0,1\} \}$ are defined by $c_i(i)=i$, $c_i(j)=1$ if $i\neq j$. 
\end{itemize}
Define multiplication on $A$ by $a\cdot b=b\circ a$ where $\circ$ denotes composition of functions. Since the $a_i, b_i$, and $c_i$ are idempotent, we can show that it is an idempotent semigroup by proving that it is closed with respect to the composition of functions. If one of the factors in a given composition is a constant, the result is a constant. If both are non-constants, they occur among the following: $c_i\circ b_i=c_i$, $b_i\circ c_i = b_i$, and if $i\neq j$, $c_j\circ b_i = c_j\circ c_i = a_1$, $b_j\circ b_i = b_j\circ c_i = a_0$. We will now consider the dual $\mathbf{A}^*$, where multiplication is given by composition of functions. We will show that this semigroup satisfies the conditions of  Theorem~\ref{teo:subdirect}, which tells us that $\mathbf{A}$ is subdirectly irreducible (note that since $a_i\cdot a_j= a_j$ for all $i,j$, $\mathbf{A}$ has no zero). By the mentioned theorem, it is enough to show that $\theta (a_0,a_1)\subseteq \theta (a_i,a_j)$ for all $i\neq j$. If $i\neq j$, $\theta(a_i,a_j)$ contains the pairs $(b_i\circ a_i, b_i\circ a_j)=(a_i,a_0)$ and $(c_i\circ a_i, c_i\circ a_j)=(a_i, a_0)$, and therefore it must contain $(a_0,a_1)$, thus $\mathbf{A}$ is a subdirectly irreducible idempotent semigroup. We now show that $\mathbf{A}$ satisfies $x\cdot y\cdot x = y\cdot x)$. If $x, y$ or $x\cdot y$ is a constant, or if $x = y$, the result is trivial. The only other case is $\{x, y\} = \{b_i, c_j\}$ for some $i,j$ and in that case it is easy to check that $x\cdot y\cdot x = y\cdot x$.
\end{proof}
We conclude that there's no cogenerating set for the variety of RRBs, so
the Special Adjoint Functor Theorem cannot be applied in this case.
\section{Factor congruences of right regular bands}
\label{sec:fact-congr-RRB}
In the following, we provide an application of the order-theoretic perspective
on bands to the study of direct product decompositions of RRBs with a central
element. This generalizes the results from \cite{fact_slat}.

One key idea
from that paper (which studies join-semilattices) was that existing binary infima in finite direct
products of join-semilattices must factorize and satisfy some
distributive and absorption properties with respect to the join.
Since the $(\leq,\cdot)$-posemigroups associated to commutative bands correspond to
meet-semilattices (Lemma~\ref{lem:lower-bounds}(\ref{item:iguales-inf})), all concepts here will be dual those in
\cite{fact_slat}; in particular, we will be speaking of partial binary suprema. We
have the analogous
\begin{lemma} \label{lem:join_prod}
  For any pair of RRBs $C,D$ and elements $c,e \in C$ and $d,f
  \in D$, the element $ \lb c, d\rb \supr \lb e, f\rb $ exists in $C
  \times D$ if and only if $c \supr e$ and $d \supr f$ exist.
\end{lemma}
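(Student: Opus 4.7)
The plan is to reduce the statement to a routine fact about product orders, after first verifying that the underlying order of the RRB $C\times D$ (with componentwise product) is exactly the product of the underlying orders of $C$ and $D$. Indeed, by the defining equivalence~(\ref{eq:order_of_RRB}), one has $\langle c,d\rangle\leq\langle e,f\rangle$ in $C\times D$ iff $\langle c,d\rangle\cdot\langle e,f\rangle=\langle c,d\rangle$, which unpacks componentwise to $c\leq e$ and $d\leq f$. I would state this identification first as a one-line observation, since the rest of the argument is purely order-theoretic.

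For the forward implication ($\Leftarrow$), I would check directly that $\langle c\o e,\, d\o f\rangle$ is the join of $\langle c,d\rangle$ and $\langle e,f\rangle$ in the product order: it is an upper bound by construction, and any upper bound $\langle a',b'\rangle$ of the two elements must satisfy $a'\geq c,e$ and $b'\geq d,f$, hence $a'\geq c\o e$ and $b'\geq d\o f$ by the defining property of the component joins.

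For the reverse implication ($\Rightarrow$), suppose that $\langle a,b\rangle \defi \langle c,d\rangle\o\langle e,f\rangle$ exists in $C\times D$. Then $a$ is an upper bound of $\{c,e\}$ and $b$ is an upper bound of $\{d,f\}$. To see that $a=c\o e$, take any upper bound $a'\in C$ of $\{c,e\}$; then $\langle a',b\rangle$ is an upper bound of $\{\langle c,d\rangle,\langle e,f\rangle\}$ in $C\times D$, so by minimality $\langle a,b\rangle\leq\langle a',b\rangle$, whence $a\leq a'$. Symmetrically, for any upper bound $b'\in D$ of $\{d,f\}$ the pair $\langle a,b'\rangle$ is an upper bound of the two elements in the product, yielding $b\leq b'$; thus $b=d\o f$.

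There is no real obstacle here: the entire content is the observation that the underlying order of the direct product of RRBs is the product of the underlying orders, after which the equivalence is the standard fact that binary suprema in a product poset exist iff the componentwise suprema exist. The slightly delicate point worth highlighting is the use of the fixed second coordinate $b$ (respectively $a$) when testing an upper bound of one factor, which works precisely because $\langle a,b\rangle$ itself already provides such an upper bound in the other coordinate.
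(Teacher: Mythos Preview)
Your proof is correct. The paper states this lemma without proof, treating it as an elementary observation analogous to the semilattice case in \cite{fact_slat}; your argument is precisely the standard one for suprema in product posets, and the only RRB-specific content---that the underlying order of $C\times D$ is the product order---you verify explicitly.
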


The main
difference between this section and
\cite{fact_slat} is that, unlike $\vee$ nor $\wedge$, the RRB operation
$\cdot$ is not commutative in general. We do assume for the rest of this section that at least one element
commutes with every other:
\begin{definition}
  Given an RRB $(A,\cdot)$, an element $c\in A$ is said to be
  \emph{central} if for all $x\in A$, we have $x\cdot c=c\cdot x$.
\end{definition}
We will highlight modifications to the proofs in the sequel. 

\subsection{Decomposition into direct product of RRBs}
For the remainder of this chapter, we will write formulas in the
language $ \{ ·, \supr \} $; occurrences of $\leq$ can be reduced using
(\ref{eq:order_of_RRB}).
The formula “$x \supr y = z$”
will be interpreted as “$z$ is the supremum of $\{ x, y \}$”:
\[ x,y\leq z\mathrel{\&} (\forall u :  x,y\leq u \;\limp\; z\leq
  u), \] unless otherwise specified; in general, every equation $t_1 =
t_2$ involving $\supr$ should be interpreted as “if one of the terms
exists, so does the other and they are equal.” It is easy to check
that the laws of associativity hold for the partial operation $\supr$
in every poset.

From now
on, let $A$ be an RRB and $c \in A$ be a fixed central
element. Let $\phi(c,x_1, x_2, x) $ be the conjunction of the
following formulas:
\begin{quote}
  \begin{itemize}  
  \item [\comm] $ x \cdot x_1 = x_1 \cdot x$ and $ x \cdot x_2 = x_2
    \cdot x$.
  \item [\dist] $ x = (x \cdot x_1) \vee (x \cdot x_2). $
  \item [\puno] $ x_1 = (x \cdot x_1) \vee (c \cdot x_1). $
  \item [\pdos] $ x_2 = (x \cdot x_2) \vee (c \cdot x_2). $
  \item [\produ] $x_1 \cdot x_2 = x \cdot c.$
  \end{itemize} 
\end{quote}
Note that \comm{} is the only formula that does not appear in the
formula $\phi(c,x_1, x_2, x) $ defined in \cite{fact_slat}. Note also
that, due to the assumption that $c$ is central and 
\comm, Lemma~\ref{lem:lower-bounds}(\ref{item:iguales-inf}) guarantees
that all products appearing in $\phi$ except for the last one are
actually infima. We will write “$x=\llangle x_1,x_2\rrangle_c$” to
denote that $\phi(c,x_1,x_2,x)$ holds.
\begin{definition}\label{defn:direct_sum}
  Suppose that $I_1, I_2$ are subsemigroups of $A$. We will say that
  $A$ is the \emph{$c$-direct product} of $I_1$ and $I_2$, and write
  $A=I_1 \times_c I_2$, if and only if the following conditions hold:
  \begin{quote}
    \begin{itemize}  
    \item [\perm] The elements of $I_1$ commute with those of $I_2$.
    \item [\Modi] For all $x,y\in A$, $x_1\in I_1$, and $x_2 \in I_2$,
      if $x\cdot c \leq x_1\cdot x_2$, then
      \begin{align*}
        \bigl((x \cdot x_1) \supr (x \cdot x_2)\bigr) \cdot y = (x
        \cdot x_1\cdot y) \supr (x \cdot x_2\cdot y),\\
        y \cdot
        \bigl((x \cdot x_1) \supr (x \cdot x_2)\bigr) = (y \cdot x
        \cdot x_1) \supr (y \cdot x \cdot x_2).
      \end{align*}
    \item [\Modii] For all $x,y\in A$, $x_1\in I_1$, and$x_2 \in I_2$,
      if $x \geq x_1\cdot x_2$, then
      \begin{align*}
        \bigl((x \cdot x_i) \supr (c \cdot x_i)\bigr) \cdot y = (x
        \cdot x_i \cdot y) \supr (c \cdot x_i \cdot y),\\
        y \cdot \bigl((x \cdot x_i) \supr (c
        \cdot x_i)\bigr) = (y \cdot x \cdot x_i ) \supr (y \cdot c
        \cdot x_i ).
      \end{align*}
      for $i=1,2$.
    \item [\Abs] For all $x_1, y_1 \in I_1$, and $z_2 \in I_2$, we
      have: $x_1 \supr (y_1 \cdot z_2) = x_1 \supr (y_1 \cdot c) $
      (and exchanging the roles of $I_1$ and $I_2$).
    \item [\exi] $ \forall x_1 \in I_1, x_2 \in I_2 \; \exists x \in
      A: x=\llangle x_1, x_2\rrangle_c. $
    \item [\onto] $ \forall x \in A \ \exists x_1 \in I_1, x_2 \in I_2
      : \; x=\llangle x_1, x_2\rrangle_c.$
    \end{itemize}
  \end{quote}
\end{definition}
Note that these are “two-sided” versions of the conditions in
\cite{fact_slat}, plus the commutativities \perm{}.

For the sake of brevity, we will omit the reference to
$c$, writing $x=\llangle x_1,x_2\rrangle$ instead of $x=\llangle
x_1,x_2\rrangle_c$. Note that \exi{} implies:
\begin{quote}
  \begin{itemize}
  \item[\ori] $ \forall x_1 \in I_1, x_2 \in I_2 \; (x_1 \cdot x_2 \leq c)$.
  \end{itemize}
\end{quote}

\subsection{The representation lemma}
\begin{lemma}\label{l:dir_sum} Suppose $A = I_1 \times_c I_2$. Then
  $x=\llangle x_1,x_2\rrangle$ defines an isomorphism $\lb x_1,x_2\rb
  \stackrel{\phi}{\mapsto} x$ between $I_1 \times I_2$ and $A$.
\end{lemma}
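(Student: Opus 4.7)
The plan is to show that the assignment $\Phi \colon I_1\times I_2 \to A$ defined by $\Phi(x_1,x_2) \defi \llangle x_1,x_2\rrangle_c$ is a well-defined bijection that preserves the semigroup operation. I would break the argument into four tasks: well-definedness (uniqueness of $x$ for each pair $(x_1,x_2)$), surjectivity, injectivity, and the homomorphism law. Existence of such an $x$ is precisely \exi{} and surjectivity is precisely \onto, so the substantive work lies in the remaining three tasks. Throughout, I would repeatedly exploit that, by \comm{} and the centrality of $c$, Lemma~\ref{lem:lower-bounds}(\ref{item:iguales-inf}) makes every product $x\cdot x_i$, $x'\cdot x_i$, or $c\cdot x_i$ appearing in $\phi$ an honest infimum in the underlying poset; thus $\phi$ reduces to statements about joins of meets, which is precisely the form in which the arguments of \cite{fact_slat} apply.

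For well-definedness, I would assume both $\phi(c,x_1,x_2,x)$ and $\phi(c,x_1,x_2,x')$ hold, abbreviate $a = x\cdot x_1$, $a' = x'\cdot x_1$, $b = x\cdot x_2$, $b' = x'\cdot x_2$, and use \puno, \pdos, and \Abs{} to force $a = a'$ and $b = b'$, after which \dist{} gives $x = x'$. For injectivity, assume $\phi(c,x_1,x_2,x)$ and $\phi(c,y_1,y_2,x)$; then \produ{} yields $x_1 x_2 = xc = y_1 y_2$, and combining the \puno-representations of $x_1$ and $y_1$ with \Abs{} (used to shuffle the $c$-part through elements of $I_2$) should collapse the two expressions into a common one, giving $x_1 = y_1$; the symmetric argument via \pdos{} yields $x_2 = y_2$.

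For the homomorphism law, given $x = \llangle x_1,x_2\rrangle$ and $y = \llangle y_1,y_2\rrangle$, I would verify that $(x_1 y_1, x_2 y_2)\in I_1\times I_2$ (which lies in $I_1\times I_2$ because $I_1, I_2$ are subsemigroups) witnesses $\phi(c, x_1 y_1, x_2 y_2, xy)$. Clause \produ{} is a direct calculation using \perm{} and the centrality of $c$: $x_1 y_1 \cdot x_2 y_2 = x_1 x_2\cdot y_1 y_2 = (xc)(yc) = xyc$. Clauses \dist, \puno, \pdos{} for the new tuple are obtained by right- or left-multiplying the corresponding identity for $x$ (or $y$) by the other factor; the two-sided laws \Modi{} and \Modii{} are designed precisely to push this multiplication through the suprema, and their hypotheses $xc \leq x_1 x_2$ and $xy\geq x_1 y_1\cdot x_2 y_2$ are automatic via \produ{} and Lemma~\ref{lem:aba-ba}(\ref{item:1}). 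Clause \comm{} for the new tuple follows by combining \perm{} with \comm{} assumed for $x$ and $y$ separately.

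The main obstacle will be the well-definedness/injectivity step: in the non-commutative setting one cannot simply ``intersect with $x_1$'' as in the semilattice version of \cite{fact_slat}, and the argument must be routed through centrality of $c$, \comm, and \Abs{} in order to split the $c$-contribution from the $x$-contribution in the expressions provided by \puno{} and \pdos. Once the bijection is established, the homomorphism verification, though bookkeeping-heavy, is entirely mechanical, driven by \Modi, \Modii, \perm, and \comm.
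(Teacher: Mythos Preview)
Your overall architecture matches the paper's: well-definedness, surjectivity via \onto, injectivity, and the homomorphism law, with the same axioms invoked at the same junctures. Two places, however, are underestimated in your sketch and constitute genuine gaps relative to what the argument actually requires.

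\textbf{Injectivity.} From $\phi(c,x_1,x_2,x)$ and $\phi(c,y_1,y_2,x)$ you need $x_1=y_1$. Your plan is to use \produ{} and \Abs{} on the \puno-representations. That combination \emph{only} yields the $c$-part: one gets $x_1 \geq y_1\cdot c$ and symmetrically $y_1\geq x_1\cdot c$, hence $c\cdot x_1 = c\cdot y_1$. But \puno{} reads $x_1=(x\cdot x_1)\vee(c\cdot x_1)$ and $y_1=(x\cdot y_1)\vee(c\cdot y_1)$, so you still must prove $x\cdot x_1 = x\cdot y_1$. In the paper this is the substantive step: one expands $x$ via \dist{} (once with $x_1,x_2$ and once with $y_1,y_2$), pushes the outer factor through with \Modi{}, and then uses \comm{}, \perm{}, \ori{} and centrality of $c$ to reduce both sides to the common value $x\cdot y_1\cdot x_1$. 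None of \dist{} or \Modi{} appears in your injectivity plan, and without them the ``collapse'' you describe does not go through. The paper explicitly flags this as requiring more than the semilattice argument of \cite{fact_slat}.

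\textbf{Homomorphism, clause \dist.} Saying the verification is ``entirely mechanical, driven by \Modi, \Modii, \perm, \comm'' skips the one genuinely non-mechanical step. After applying \dist{} for $x$ and then for $z$ together with \Modi{}, one arrives at a four-term join
\[
(x\cdot x_1\cdot z\cdot z_1)\ \vee\ (x\cdot x_1\cdot z\cdot z_2)\ \vee\ (x\cdot x_2\cdot z\cdot z_1)\ \vee\ (x\cdot x_2\cdot z\cdot z_2).
\]
To rewrite the outer terms as $(x\cdot z)\cdot(x_j\cdot z_j)$ you must commute $x_j$ past $z$, but no axiom says $z$ commutes with elements of $I_j$. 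The paper proves, as a separate computation using \puno{}, \Modii{}, \produ{}, \perm{} and \eqref{eq:aba-ba}, the identity $y\cdot z\cdot z_j = z\cdot y\cdot z_j$ for arbitrary $y$, and then shows the two cross terms equal $x_1 x_2 z_1 z_2$ and are absorbed by the outer terms. Your plan of ``right- or left-multiplying the identity for $x$ by the other factor'' gives the four-term expansion but not its simplification; the cross-term elimination is where \cite{fact_slat}'s method breaks down and a new argument is needed. A similar remark applies to your claim that \comm{} for the pair $(x_1 y_1,x_2 y_2)$ follows from \perm{} and the individual \comm{} clauses: nothing guarantees that $y$ commutes with $x_1$ or $x$ with $y_1$, so this too has to be routed through computations of the above type rather than a direct permutation.
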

\begin{proof}
  The proof that the function $\lb x_1, x_2\rb \stackrel
  {\phi}{\mapsto} x $ is well-defined is very similar to that in
  \cite{fact_slat}, but it involves essential uses of \perm{}
  and \comm{}. It is left as an interesting exercise for the reader.

  The function defined by $\phi$ is surjective by \onto; to see it is injective,
  let $x=\llangle x_1,x_2\rrangle$ and $x=\llangle
  y_1,y_2\rrangle$. We have:
  \begin{align*}  
    x_1 &= x_1 \supr (x_1 · x_2) \\
    &= x_1 \supr (x · c) && \text{by
      \produ} \\
    &= x_1 \supr (y_1 · y_2) && \text{by \produ{}
      again} \\
    &= x_1 \supr (y_1 · c) && \text{by \Abs}
  \end{align*}  
  and then $x_1 \geq y_1 · c$. Similarly, $y_1 \geq x_1 · c$ and in
  conclusion, by Lemma ~\ref{lem:lower-bounds}(\ref{item:1}),
  \begin{equation}\label{eq: 1} 
    x_1 · c = y_1 · c.
  \end{equation}  
  On the other hand,
  \begin{align*} 
    x · y_1 &=y_1 · x &&\text{by \comm{}} \\
    &= y_1 · ((x · x_1) \supr
    (x · x_2 ))&& \text {by \dist} \\
    &= (y_1 · x · x_1) \supr (y_1 · x
    · x_2 ) && \text {by \Modi} \\
    &= (x · y_1 · x_1) \supr (x · y_1 ·
    x_2) && \text{by \comm{}} \\
    &= (x · y_1 · x_1) \supr (x · y_1 · x_2
    · c) && \text{by \ori} \\
    &= (x · y_1 · x_1) \supr (x · c) &&
    \text{$c$ central and $x · c \leq x_1, y_1, x_2, y_2$} \\
    &= x · y_1 ·
    x_1, && \text{by the same argument.}
  \end{align*}  
  and also
  \begin{align*} 
    x · x_1 &=((x · y_1) \supr (x · y_2 )) · x_1&& \text {by \dist}
    \\
    &= (x · y_1 · x_1) \supr (x · y_2 · x_1) && \text {by \Modi}
    \\
    &= (x · y_1 · x_1) \supr (x · x_1 · y_2) && \text{by \perm{}}
    \\
    &= (x · y_1 · x_1) \supr (x · x_1 · y_2 · c) && \text{by \ori}
    \\
    &= (x · y_1 · x_1) \supr (x · c) && \text{$c$ central and $x · c
      \leq x_1, y_1, x_2, y_2$} \\
    &= x · y_1 · x_1. && \text{by the
      same argument.}
  \end{align*}  
  Thus, we obtain $x · x_1= x · y_1$. (Here, we must use the fact
  that $x · y_1=y_1 · x$ because otherwise, it is not possible to prove
  that $x · y_1=x · x_1$). Combining this with~(\ref{eq: 1}) and using \puno{},
  we have
  \[x_1 = (x · x_1 ) \supr (c · x_1) = (x · y_1 ) \supr (c · y_1) = y_1.\]
  By the same reasoning, $x_2=y_2$. The preceding part requires a more
  extensive development than the proof in \cite{fact_slat}.

  Next we
  prove that $\phi$ preserves $·$. Suppose $ x=\llangle
  x_1,x_2\rrangle$ and $z=\llangle z_1, z_2\rrangle$; since each $I_1, I_2$
  is a subsemigroup, we know that $x_j · z_j \in I_j$ for
  $j=1,2$. We want to show $ x · z=\llangle x_1 · z_1, x_2 ·
  z_2\rrangle$. The property \produ{} is immediate (and therefore, we can apply
  \Modi{} and \Modii{} to $x· z$).  Now we prove \dist:
  \begin{align*}  
    x · z &= \bigl((x · x_1) \supr (x · x_2)\bigr) · z && \text {by
      \dist{} for $x$} \\
    &= (x · x_1 · z ) \supr (x · x_2 · z ).  &&
    \text {by \Modi}
  \end{align*}
  This last term is equal to
  \begin{equation}\label{eq:7}
    (x · x_1 · z · z_1) \supr (x · x_1 · z · z_2) \supr (x · x_2 · z ·
    z_1) \supr (x · x_2 · z · z_2),
  \end{equation}
  by \dist{} for $z$ and \Modi.  Note that, for any $y$,
  \begin{align*}
    (y · z · z_1)&=y · z · (z · z_1\supr c · z_1) &&\text{by \puno{} for
      $z_1$}\\
    &=(y · z · z_1)\supr (y · z · c · z_1) &&\text{by
      \Modi{} for $z$}\\
    &=(y · z · z_1)\supr (y · z_1 · z_2 · z_1)
    &&\text{by \produ{} for $z$}\\
    &=(y · z · z_1)\supr (y · z_2 ·
    z_1) &&\text{by Eq.~\ref{eq:aba-ba}}\\
    &=(y · z · z_1)\supr
    (z_2 · y · z_1) &&\text{by \perm{}}\\
    &=(y · z · z_1)\supr (z_1 ·
    z_2 · y · z_1) &&\text{by Eq.~\ref{eq:aba-ba}
      again}\\
    &=(y · z · z_1)\supr (z · c · y · z_1) &&\text{by
      \produ{} for $z$}\\
    &=(z · y · z · z_1)\supr (z · y · c · z_1)
    &&\text{by Eq.~\ref{eq:aba-ba}; $c$ is central}\\
    &=z ·
    y · (z · z_1\supr c · z_1) &&\text{by \Modi{} for $z$}\\
    &=z · y ·
    z_1.
  \end{align*}
  This development is necessary in order to permute $x_1$ and $x_2$ with $z$
  in the first and fourth term of~(\ref{eq:7}) respectively, as we cannot
  guarantee that $\cdot$ commutes over these elements. This equality will be
  used multiple times in this proof, always for the same reason. Similarly,
  $z · y · z_2=y · z · z_2$.

  We can now rewrite the term~(\ref{eq:7}) as follows:
  \begin{align*}
    (x · z · x_1 · z_1) \supr (x · x_1 · z · z_2)\supr (x · x_2 · z · z_1)
    \supr (x · z · x_2 · z_2).
  \end{align*}

  Note that
  \begin{align*}  
    x · x_1 · z · z_2 &= x · x_1 · z_2 · z && \text{by \comm{}} \\
    &= x ·
    x_1 · z_2 · c · z &&\text{by \ori}\\
    &= x · c · x_1 · z_2 · z_1 · z_2 &&\text{by Eq.~\ref{eq:aba-ba} and $c$ is central}\\
    &= x_1 ·
    x_2 · x_1 · z_2 · z_1 · z_2 &&\text{by \produ{}}\\
    &= x_2 ·
    x_1 · z_1 · z_2\\
    &= x_1 · x_2 · z_1 · z_2. &&\text{by \perm{}}
  \end{align*}  
  Similarly, $x · x_2 · z · z_1=x_1 · x_2 · z_1 · z_2$

  Let's see that $(x_1 · x_2 · z_1 · z_2)\leq(x · x_1 · z · z_1)$:
  \begin{align*}
    (x_1 · x_2 · z_1 · z_2) · (x · z · x_1 · z_1) &= x_2 · z_2 · x ·
    x_1 · z · z_1\\ &=x_1 · x_2 · z_2 · x · z · z_1&&\text{by \comm{},
      \perm{}}\\
    &=x · c · z_2 · x · z · z_1 &&\text{by \produ{} for $x$}\\
    &=z_2 · x · z · c · z_1 \\
    &=z_2 · x · c · z
    · c · z_1 \\
    &=z_2 · x_1 · x_2 · z_1 · z_2 · z_1 &&\text{by \produ{} for $z$}\\
    &= x_1 · x_2 · z_2 · z_1 \\
    &= x_1 · x_2 · z_1 · z_2.
  \end{align*}
  Therefore, we can eliminate the two middle terms in Equation~(\ref{eq:7}) and obtain \dist{} for $x· z$:
  \[
    (x · z)  = \bigl ((x · z)  · (x_1 · z_1) \bigr) \supr \bigl
    ((x · z) · (x_2 · z_2) \bigr).
  \]
  Here, the method for eliminating the two terms also differs from
  what was done in \cite{fact_slat}. We can obtain \puno{} and \pdos{}
  in a similar way. We prove \puno:
  \begin{align*}  
    x_1 · z_1 &= ((x · x_1) \supr (c · x_1))· z_1 && \text{by \puno{}
      for $x$} \\
    & = (x · x_1 · z_1) \supr (c · x_1 · z_1). && \text{by
      \Modii}
  \end{align*}
  By \puno{} for $z$ followed by \Modii{} on each term of the supremum, the last term is equal to
  \begin{equation*}
    ( x · x_1 · z · z_1) \supr (x · x_1 · c · z_1) \supr (c · x_1 · z ·
    z_1) \supr (x_1 · c · z_1 ).
  \end{equation*}
  Due to the observed fact (that $z · x_1 · z_1=x_1 · z · z_1$) and
  the fact that $c$ is central, we can rewrite this term as
  \begin{equation}
    ( x · z · x_1 · z_1) \supr (x · c · x_1 · z_1) \supr (x_1 · z · c ·
    z_1) \supr (c · x_1 · z_1 ). \label{eq: 6}
  \end{equation}
  Note that $(x · c · x_1 · z_1)=x · (c · x_1 · z_1)\leq(c · x_1 ·
  z_1)$, so we can eliminate the second term of the supremum. To
  eliminate the third term, we can rewrite it as:
  \begin{align*}
    (x_1 · z · c · z_1)&=(x_1 · z_1 · z_2 · z_1) &&\text{By \produ{}
      for $z$}\\
    &=(x_1 · z_2 · z_1)\\
    &=(x_1 · z · c) &&\text{by
      \produ{} for $z$}\\
    &=(c · x_1 · z). &&\text{$c$ is central}
  \end{align*}
  Let's see that $(c · x_1 · z)\leq (c · x_1 · z_1)$:
  \begin{align*}
    (c · x_1 · z) · (c · x_1 · z_1)
    &= z · c · x_1 · z_1\\
    &=z_1 · z_2 · x_1 · z_1 &&\text{by \produ{} for $z$}\\
    &=x_1 · z_1  · z_2 &&\text{by \perm{}}\\
    &=x_1 · z · c &&\text{by \produ{} for $z$}\\
    &=c · x_1 · z. &&\text{$c$ is central}
  \end{align*}
  Then Equation~(\ref{eq:6}) becomes:
  \begin{align*}
    ( x · z · x_1 · z_1) \supr (c · x_1 · z_1 )
  \end{align*}
  and this is equal to
  \[
    (  ( x · z) · (x_1 · z_1)) \supr (c · (x_1 · z_1 )). \qedhere
  \]
\end{proof}

\subsection{The factorization theorem}
We begin by
introducing some notation.
Recall that  $\ker f$ is 
always a congruence when $f:A\rightarrow B$ is a homomorphism.
We say that two congruences $\theta, \delta\in \Con(A)$ are
\emph{complementary factor congruences} if $\theta\cap\delta=\mathrm{Id}_A$ and
their relational compositions $\theta\circ\delta$ and $\delta\circ\theta$ equal $A\times A$.
Complementary factor congruences are in 1-1 correspondence to direct
product decompositions.

As $\llangle\cdot,\cdot \rrangle$ defines an isomorphism, there exist
canonical projections $\pi_j : A \rightarrow I_j$ with $j=1,2$ such
that:
\begin{equation}\label{eq:iso}
  \forall x\in A, x_1\in I_1, x_2\in I_2 \ : \ x=\llangle
  x_1,x_2\rrangle \iff \pi_1(x) =x_1 \text{ and } \pi_2(x) = x_2.
\end{equation}
Let's define, for any congruence $\theta$ on $A$, the set $I_\theta
\defi \{ a\in A : a\thr c \}$.
\begin{theorem}\label{th:bijection}
  Let $A$ be an RRB and $c\in A$ be a central element. The
  mappings
  \[\begin{array}{ccc}
    \lb \theta, \delta\rb & \stackrel{\mathsf{I}}{\longmapsto} & \lb
    I_\theta, I_\delta\rb \\ \lb \ker \pi_2, \ker \pi_1\rb &
    \stackrel{\mathsf{K}}{\longmapsfrom} & \lb I_1, I_2\rb
    \end{array}\]
  are mutually inverse maps defined between pairs of complementary factor
  congruences of $A$ and the set of pairs of subsemigroups $I_1,
  I_2$ of $A$ such that $A=I_1 \times_c I_2$.
\end{theorem}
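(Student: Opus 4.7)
The plan is to verify that each map lands in the correct target and that the composites are identities, using throughout the isomorphism $\llangle\cdot,\cdot\rrangle_c\colon I_1\times I_2\to A$ from Lemma~\ref{l:dir_sum} together with the standard correspondence between complementary factor congruences and direct product decompositions. That $\mathsf{K}$ is well-defined is immediate from Lemma~\ref{l:dir_sum}: it identifies $A$ with $I_1\times I_2$, so the projections $\pi_1,\pi_2$ are homomorphisms and $\langle\ker\pi_2,\ker\pi_1\rangle$ is a complementary pair of factor congruences.

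For $\mathsf{I}$, given complementary factor congruences $\langle\theta,\delta\rangle$, I would first note that the sets $I_\theta=c/\theta$ and $I_\delta=c/\delta$ are subsemigroups since $c$ is idempotent and $\theta,\delta$ are congruences. Complementarity yields an isomorphism $A\cong A/\theta\times A/\delta$ under which $I_\theta$ and $I_\delta$ become the ``axes'' $\{c/\theta\}\times A/\delta$ and $A/\theta\times\{c/\delta\}$ respectively, while a general $a$ corresponds to $(a/\theta,a/\delta)$. For each $a$, setting $a_1:=(c/\theta,a/\delta)$ and $a_2:=(a/\theta,c/\delta)$, a routine coordinatewise verification --- using centrality of $c$ in each factor together with Lemma~\ref{lem:join_prod} to handle the partial suprema --- establishes $\phi(c,a_1,a_2,a)$, yielding \onto. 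Axiom \exi{} follows by the same identification: given $x_1=(c/\theta,v)$ and $x_2=(u,c/\delta)$, one takes $x:=(u,v)$. The remaining axioms \perm{} and \Abs{} reduce to centrality and idempotency, while \Modi{} and \Modii{} follow from coordinatewise distributivity of~$\cdot$ over the partial suprema in the product.

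For $\mathsf{I}\circ\mathsf{K}=\mathrm{id}$, assume $A=I_1\times_c I_2$. A preliminary observation is $c\in I_1\cap I_2$: writing $c=\llangle c_1,c_2\rrangle$, \produ{} forces $c_1\cdot c_2=c\cdot c=c$, whereas $c_1\cdot c_2\leq c_1,c_2\leq c$ (the first inequality by Lemma~\ref{lem:aba-ba}(\ref{item:1}), the second by \ori), so $c_1=c_2=c$. For any $a_1\in I_1$ one then checks $\phi(c,a_1,c,a_1)$ directly, so $\pi_2(a_1)=c=\pi_2(c)$; thus $I_1\subseteq I_{\ker\pi_2}$, and the reverse inclusion follows from the bijectivity of $\llangle\cdot,\cdot\rrangle$. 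For $\mathsf{K}\circ\mathsf{I}=\mathrm{id}$: the projections associated to $A=I_\theta\times_c I_\delta$ send $a$ to the unique element of $I_\theta$ (resp.\ $I_\delta$) in its $\delta$-class (resp.\ $\theta$-class), whence $\ker\pi_1=\delta$ and $\ker\pi_2=\theta$, using that $\theta\cap\delta=\mathrm{Id}_A$.

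The principal obstacle will be the verification of \Modi, \Modii, and \Abs{} in the second step. Although each reduces to coordinatewise identities in $A/\theta\times A/\delta$, the non-commutativity of $\cdot$ forces the left and right versions of each axiom to be treated separately, and the conditional hypotheses ``$x\cdot c\leq x_1\cdot x_2$'' in \Modi{} and ``$x\geq x_1\cdot x_2$'' in \Modii{} must be translated into coordinate conditions before Lemma~\ref{lem:join_prod} can be invoked to guarantee the existence of the relevant partial suprema. I expect these checks to mirror the calculations in the proof of Lemma~\ref{l:dir_sum}, adapted to the weakened hypothesis of a merely central~$c$.
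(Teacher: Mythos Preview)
Your plan matches the paper's approach: both reduce the verification of the axioms in Definition~\ref{defn:direct_sum} to coordinatewise computations in $A/\theta\times A/\delta$ via the canonical isomorphism, and the paper likewise singles out \Modi{} and \Modii{} as the only parts requiring genuinely new work beyond~\cite{fact_slat}. One small slip in your $\mathsf{I}\circ\mathsf{K}$ argument: the inequality $c_1,c_2\leq c$ does not follow from \ori{} (which only gives $c_1\cdot c_2\leq c$, i.e.\ $c\leq c$ here); instead use \puno{} and \pdos{} applied to $c=\llangle c_1,c_2\rrangle$, which yield $c_1=c\cdot c_1$ and $c_2=c\cdot c_2$, whence $c_1,c_2\leq c$ by centrality of $c$ and Lemma~\ref{lem:aba-ba}(\ref{item:1}).
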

\begin{proof}
  The only part of this proof that differs in a nontrivial way from
  \cite{fact_slat} is the verification that \Modi{} and \Modii{} hold.

  The
  mapping $a \mapsto \lb a/\theta, a/\delta\rb $ is an isomorphism
  between $A$ and $ A/\theta \times A/\delta$. Under this isomorphism,
  $I_\theta$ corresponds to $\{\lb c', a''\rb : a'' \in A/\delta \}$ and
  $I_\delta$ corresponds to $\{\lb a', c''\rb : a' \in A/\theta \}$,
  where $c' = c/\theta$ and $c'' =c/\delta$. From now on, we will
  identify $I_\theta$ and $I_\delta$ with their respective isomorphic
  images and verify the axioms for $I_\theta \times_c I_\delta = A$ in $
  A/\theta \times A/\delta$. Note that since $c$ is central, $c'$ and
  $c''$ are also central.

  To verify \Modi{}, suppose $x= \lb x', x''\rb $, $y= \lb y', y''\rb $,
  $x_1= \lb c', x''_1\rb \in I_\theta$, and $x_2 = \lb x'_2,c''\rb \in
  I_\delta$. Notice that $x· c \leq x_1· x_2$ implies
  \begin{align}\label{eq:5}
    x'· c' &\leq c'· x_2' & x''· c'' &\leq x_1''· c''.
  \end{align}
  From the first inequality in~(\ref{eq:5}), we obtain

  \begin{align*}
    ( x'· c') · (x' · x_2')&= x' · x_2' · c' &&\text{$c'$ is
      central}\\ &=x' · c' · x_2 · c'\\ &= (x' · c') · (x_2' ·
    c')\\ &=x' · c'.
  \end{align*}
  In other words, $x'· c' \leq x' · x_2'$. \\ Similarly, from the second
  inequality in~(\ref{eq:5}), we obtain $x''· c''\leq x'' · x_1''$.

  That is:
  \begin{align}
    x'· c' &\leq x' · x_2'  &  x''· c''&\leq x'' · x_1'', \label{eq:6}
  \end{align}
  and therefore, we have $x'· c'· y' \leq x' · x_2'· y'$ and
  $x''· c'' · y'' \leq  x''· x_1''· y''$. Applying Lemma~\ref{lem:join_prod}, we obtain:
  \begin{align*}
    (x  · x_1 \supr x · x_2) · y 
    &=  \lb (x' ·  c'\supr x'· x_2')· y', (x''· x_1''\supr x''·
    c'') ·  y''\rb  \\
    &= \lb x' ·  x_2'· y', x''· x_1'' · y''\rb  \\
    &= \lb x' · c' · y'  \supr x'· x_2'· y', x'' · x_1''· y'' \supr
    x''  · c''·  y''\rb \\
    & = x  · x_1· y \supr x · x_2 · y.
  \end{align*}

  Now, to distribute to the left, the equations~(\ref{eq:5}) also imply
  \begin{align*}
    y' · x'· c' &=y' · x'· c' · x_2' ·c  \\
    &=y' · x' · x_2' ·c  \\
    &=c\cdot y' · x' · x_2'.
  \end{align*}
  This gives us $y' · x'· c' \leq y' · x' · x_2'$ and, analogously, $y'' · x''· c' \leq y'' · x'' · x_1''$.
  Then, together with~(\ref{eq:6}), we have
  \begin{align*}
    y · (x  · x_1 \supr x · x_2) 
    &=  \lb y'· (x' ·  c'\supr x'· x_2'), y''· (x''· x_1''\supr x''·
    c'') \rb  \\
    &= \lb y'· x' ·  x_2', y''·x''· x_1''\rb  \\
    &= \lb y'· x' · c'   \supr y'·x'· x_2', y''·x'' · x_1''\supr
    y''·x''  · c''\rb \\
    & = y· x  · x_1\supr y·x · x_2 .
  \end{align*}

  We leave \Modii{} to the reader.
\end{proof}

As in \cite{fact_slat}, the characterization of direct representations assume a simpler
forms when $c$ is an endpoint of the poset. For example,
\begin{theorem}\label{th:caract_0} Let $A$ be an RRB with identity $1$. Then $A=I_1 \times_1 I_2$  if and
  only if $I_1,  I_2 \leq A$ satisfy:   
  \begin{quote}
    \begin{enumerate}
    \item [\perm] The elements of $I_1$ commute with those of $I_2$.
    \item[\Abs] For all $x_1, y_1 \in I_1$ and $z_2 \in I_2$, we have:
      $x_1 \supr (y_1 \cdot z_2) = x_1 \supr y_1  $ (and interchanging $I_1$ and $I_2$).  
    \item[\onto]  $ I_1 \cdot I_2 = A.$  
    \end{enumerate}  
  \end{quote}
  Moreover, $I_1$ and $I_2$ are filters of $A$.\qed
\end{theorem}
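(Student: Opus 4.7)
The forward direction is immediate: \perm{}, \Abs{}, and \onto{} occur verbatim in Definition~\ref{defn:direct_sum}. For the converse, rather than verify the remaining axioms \exi{}, \Modi{}, \Modii{} directly, my plan is to produce a pair of complementary factor congruences whose associated subsemigroups $I_\theta, I_\delta$ (in the sense of Theorem~\ref{th:bijection}) coincide with $I_1, I_2$, and then invoke that theorem.

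I first establish that the decomposition $a = a_1 \cdot a_2$ granted by \onto{} is unique. If $a = x_1 \cdot x_2 = y_1 \cdot y_2$, then \Abs{} gives $x_1 \supr a = x_1 \supr (y_1 \cdot y_2) = x_1 \supr y_1$; since $a \leq x_1$ (because $x_1 \cdot x_2 = x_2 \cdot x_1 \leq x_1$ by \perm{} and Lemma~\ref{lem:aba-ba}(\ref{item:1})), the left side equals $x_1$, so $y_1 \leq x_1$, and symmetry yields $x_1 = y_1$; analogously $x_2 = y_2$. Uniqueness produces well-defined projections $\pi_i \colon A \to I_i$ with $\pi_1(a) = a_1$ and $\pi_2(a) = a_2$. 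Using \perm{} to swap the middle two factors of $(a_1 \cdot a_2) \cdot (b_1 \cdot b_2)$, one gets $a \cdot b = (a_1 \cdot b_1) \cdot (a_2 \cdot b_2)$, whence by uniqueness each $\pi_i$ is a semigroup homomorphism. Setting $\theta \defi \ker \pi_2$ and $\delta \defi \ker \pi_1$: their intersection is $\mathrm{Id}_A$ by uniqueness of decomposition, and for any $a, b \in A$ the element $\pi_1(a) \cdot \pi_2(b)$ is $\delta$-related to $a$ and $\theta$-related to $b$, so $\delta \circ \theta = \theta \circ \delta = A \times A$; thus $\theta, \delta$ are complementary factor congruences.

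To identify $I_\theta, I_\delta$ with $I_1, I_2$, note first that $1 \in I_1 \cap I_2$: by \onto{} write $1 = u \cdot v$ with $u \in I_1, v \in I_2$; Lemma~\ref{lem:aba-ba}(\ref{item:1}), applied directly and after an \perm{}-swap, forces $u = v = 1$. Hence $\pi_1(1) = \pi_2(1) = 1$, and $a$ satisfies $\pi_2(a) = 1$ iff $a = \pi_1(a) \cdot 1 \in I_1$; so $I_\theta = \{a : a \,\theta\, 1\} = \{a : \pi_2(a) = 1\} = I_1$, and analogously $I_\delta = I_2$. Theorem~\ref{th:bijection} then yields $A = I_1 \times_1 I_2$. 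The filter property of $I_1$ follows from another use of \Abs{}: if $y \geq x_1 \in I_1$ and $y = y_1 \cdot y_2$, then $y = x_1 \supr y = x_1 \supr (y_1 \cdot y_2) = x_1 \supr y_1 \geq y_1$; combined with $y \leq y_1$ (from $y = y_1 \cdot y_2 \leq y_1$), this gives $y = y_1 \in I_1$; similarly for $I_2$.

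The main obstacle is organizational: it is much cleaner to avoid a direct verification of the intricate distributive conditions \Modi{} and \Modii{} and instead route the argument through the bijection of Theorem~\ref{th:bijection}. The substantive content lies in the uniqueness-of-decomposition argument powered by \Abs{}, and in the homomorphism property of the projections powered by \perm{}.
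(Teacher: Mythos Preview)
Your proof is correct. The paper actually omits its own proof here (note the \qed{} appended directly to the statement), pointing only to the analogous semilattice result in \cite{fact_slat}, so there is no detailed argument to compare against. That said, the intended route—mirroring \cite{fact_slat}—is presumably a direct check that with $c=1$ the remaining axioms of Definition~\ref{defn:direct_sum} collapse: \puno{}, \pdos{}, \exi{} become vacuous since $1\cdot x_i=x_i$ and \produ{} reads $x=x_1\cdot x_2$; and under the hypothesis of \Modi{} one has $x\leq x_1,x_2$, so $x\cdot x_i=x$ and the distributivity identities trivialize. Your approach is genuinely different: rather than touching \Modi{} or \Modii{} at all, you extract unique factorization from \Abs{} and \perm{}, build the projection homomorphisms and hence a pair of complementary factor congruences, identify $I_\theta,I_\delta$ with $I_1,I_2$, and read off $A=I_1\times_1 I_2$ from the bijection of Theorem~\ref{th:bijection}. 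This buys you a clean, conceptual argument that sidesteps any computation with the distributive conditions; the direct route, by contrast, is more self-contained but requires case-checking the two-sided identities. One small quibble: ``occur verbatim'' overstates the case for \onto{}, since in Definition~\ref{defn:direct_sum} it asserts the full formula $x=\llangle x_1,x_2\rrangle_1$ rather than just $x=x_1\cdot x_2$; you should note that the extra clauses \comm{}, \dist{}, \puno{}, \pdos{} are automatic once $c=1$ and \perm{} holds.
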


\section{Conclusion}
\label{sec:conclusion}
An order-theoretical point of view while studying bands has proven to be very
informative, as the previous results show. We have also seen some fruits of a
more general, categorical approach to the study of definability of RRBs and
other classes of algebras.  This approach has also
led us to ask some questions which we now present.

In Example~\ref{exm:distr-complement}, we focused on the variety of distributive lattices
because it is locally finite, while the
free (general) lattice on three generators is infinite. As a result, computing $FU\mathbf{C}$
becomes significantly more complicated when considering general
lattices. Observe that the connected components of graphs
obtained by applying $U$ contain at most two elements. This
motivates omitting the distributivity hypothesis:
\begin{question}
  Let $L := \{\wedge, \vee, 0, 1, C\}$ and $K$ the class of bounded (non
    necessarily distributive)
    lattices, where $C$ is the binary relation defined by ``being a pair of
    complemented elements''. Which graphs are in the class $\K_R$?
\end{question}
For a sample we have (in all cases, a two-element connected component is to be
added to the indicated graph):
\begin{itemize}
\item A $3$-cycle, by considering $M_3$.
\item The complete graph $K_n$, by considering $M_n$.
\item In general, the complete bipartite graph $K_{n,m}$ is obtained
  from $\{0\} + (L_n\sqcup L_m) + \{1\}$, where $+$ denotes ordered sum,
      $\sqcup$ denotes disjoint union, and $L_k$ denotes the $k$-element linear order. Note that this allows us to obtain the $4$-cycle.
\end{itemize}
It is not possible to obtain the $5$-cycle.
We do not know if there is an example having it as \emph{one of its components}.

A final last remark is that in our seminal example involving RRBs (Corollary~\ref{cor:adjunction-RRB-AP}) we are
considering plain poset homomorphisms as the morphisms of the category
$\categoria{AP}$. We are very curious if there exists a more appropriate notion
of morphism that preserves more of the structure of associative posets.

\paragraph*{Acknowledgment}
We thank Prof.~Miguel Campercholi (Universidad Nacional de Córdoba) for suggesting that we study the existence of
a left adjoint for the forgetful functor from the category of RRBs to that of
associative posets. We also want to thank Nancy Moyano (Centro de Investigación
y Estudios de Matemática, CIEM-FaMAF) for support during this project.

\providecommand{\noopsort}[1]{}
\begin{small}\end{small}


\begin{thebibliography}{4}
\expandafter\ifx\csname natexlab\endcsname\relax\def\natexlab#1{#1}\fi
\providecommand{\bibinfo}[2]{#2}
\ifx\xfnm\relax \def\xfnm[#1]{\unskip,\space#1}\fi
\bibitem[{Gerhard(1971)}]{Ger2}
\textsc{\bibinfo{author}{J.~Gerhard}}, \bibinfo{title}{Subdirectly irreducible
  idempotent semigroups}, \textit{\bibinfo{journal}{Pacific Journal of
  Mathematics}} \textbf{\bibinfo{volume}{39}}: \bibinfo{pages}{669--676}
  (\bibinfo{year}{1971}).
\bibitem[{{Kuperman} et~al.(2024){Kuperman}, {Petrovich} and
  S{\'a}nchez~Terraf}]{2024arXiv240407877K}
\textsc{\bibinfo{author}{J.~{Kuperman}}, \bibinfo{author}{A.~{Petrovich}},
  \bibinfo{author}{P.~S{\'a}nchez~Terraf}}, \bibinfo{title}{Definability of
  band structures on posets}, \textit{\bibinfo{journal}{Semigroup Forum}}
  (\bibinfo{year}{2024}).
\bibitem[{Mac~Lane(1998)}]{lane1998categories}
\textsc{\bibinfo{author}{S.~Mac~Lane}}, ``\bibinfo{title}{Categories for the
  Working Mathematician}'', \bibinfo{series}{Graduate Texts in
  Mathematics}~\textbf{\bibinfo{volume}{5}}, \bibinfo{publisher}{Springer}
  (\bibinfo{year}{1998}), \bibinfo{edition}{2nd} edition.
\bibitem[{S{\'a}nchez~Terraf(2011)}]{fact_slat}
\textsc{\bibinfo{author}{P.~S{\'a}nchez~Terraf}}, \bibinfo{title}{Factor
  congruences in semilattices}, \textit{\bibinfo{journal}{Revista de la
  {U}ni\'on {M}atem\'atica {A}rgentina}} \textbf{\bibinfo{volume}{52}}:
  \bibinfo{pages}{1--10} (\bibinfo{year}{2011}).

\end{thebibliography}
\end{document}